%% file: main.tex
\documentclass[final,hidelinks,onefignum,onetabnum]{siamart250211}

\input{ex_shared}

\ifpdf
\hypersetup{
  pdftitle={Well-posedness and finite element approximation of the electrostatic shear Alfvén wave equations},
  pdfauthor={A. Buffa, T. Miehling, M. Picasso and M. Renoldner and P. Ricci}
}
\fi

\begin{document}


\maketitle

\begin{abstract}
The aim of this paper is to study the well-posedness and finite element approximation of the electrostatic shear Alfvén wave equations, a coupled system of two partial differential equations arising in plasma physics as a simplified sub-model of the drift-reduced Braginskii equations. To this end, anisotropic Sobolev spaces depending on the normalized magnetic field $\b$ are introduced, together with a Poincaré-type inequality along the integral curves of $\b$, which holds under a geometric directedness condition on the magnetic field. Using these tools, existence, uniqueness, and stability of a weak solution are established via the Faedo-Galerkin method. It is also shown that the geometric condition is satisfied in tokamak and stellarator configurations. A numerical scheme is then proposed, combining Lagrange finite elements in space with a Crank-Nicolson discretization in time. The scheme is shown to conserve a discrete energy exactly in the homogeneous case, and a priori error estimates are derived in the natural energy norm. Several numerical experiments are reported in two and three space dimensions, which confirm the theoretical results and indicate that the geometric condition on the magnetic field is necessary for the invertibility of the discrete system matrix.
\end{abstract}

\begin{keywords}
Alfvén waves, Braginskii equations, well-posedness, convergence analysis
\end{keywords}

\begin{MSCcodes}
35A01, 
35A02, 
35B45, 
35Q70, 
35L80, 
35M33, 
65M60, 
65M12, 
65M15, 
76W05  
\end{MSCcodes}



\section{Introduction}
In the last decades, research in magnetic confinement fusion plasma physics has become more and more reliant on numerical analysis and computational mathematics \cite{fasoli_computational_2016, ricci_simulation_2015, schedl_holistic_2020}. Of particular interest is the theoretical understanding and development of solution algeorithms for the drift reduced Braginskii equations, a two-fluid turbulence model which describes plasma behaviour in the boundary region of fusion devices. A prominent example is the Global Braginskii Solver (GBS) a three-dimensional, finite-difference based simulation code 
\cite{ricci_GBS_2012, giacomin_gbs_2022,halpern-braginskii-cons,paruta_simulation_2018}. 
The code solves for eight unknown quantities. 
Six of them describe the plasma particle dynamics, namely the particle density, vorticity, as well as the electron and ion velocities, and temperatures, respectively. The remaining two unknowns are the electrostatic potential, and the fluctuation of the magnetic field.
The Braginskii fluid equations \cite{braginskii_transport_1965} are derived from a kinetic model, after which a so-called drift-reduction yields the drift-reduced Braginskii equations \cite{Zeiler1997Nonlinear,braginskii_transport_1965}. 


Electrostatic shear Alfvén waves \cite{jolliet_2014}, described by a coupled system of two partial differential equations, capture one of the fastest oscillatory dynamics within the drift-reduced Braginskii equations. Their numerical solution has so far been addressed using finite difference methods \cite{bassanini_2024}.

In the following, we aim to study the weak formulation of the electrostatic shear Alfvén wave equations in a general setting and to propose a robust discretisation method. To improve readability, we refer to the electrostatic shear Alfvén wave equations simply as the Alfvén waves or the Alfvén wave equations, throughout the rest of the article. 

The paper is structured as follows. In section \ref{sec:alfvenwaveequations}, we present the Alfvén wave equations and their connection to the drift-reduced Braginskii equations. In order to make sense of a weak form, and establish its well-posedness in section \ref{sec:wellposedness}, we introduce two somewhat nonstandard objects: a class of anisotropic Sobolev spaces, as well as a novel curvilinear Poincaré-type inequality along vector fields. A more detailed discussion thereof is moved to appendix \ref{sec:spaces}.
In section \ref{sec:physicaleq} we discuss in particular the example of Alfvén wave equations in the setting of tokamak and stellarator fusion experiments, which was a main motivation for this work. In section \ref{sec:numerics} we derive a finite element discretisation, and prove a priori error estimates. The discrete formulation is verified based on several numerical examples in section \ref{sec:numexamples}.




\input{Alfven_waves_on_the_Tokamak}

\input{Well-posedness}

\input{numeric_analysis}

\input{Numerical_Examples}


\section{Conclusion}
\label{sec:conclusions}
The Alfvén wave equations model the fastest oscillatory dynamics in drift-reduced Braginskii equations. Assuming a geometric property of the magnetic field, well-posedness of a weak formulation is proved. The argument relied on introducing a class of anisotropic Sobolev spaces and a novel, curvilinear Poincaré-type inequality along the magnetic field. A numerical scheme preserving energy is presented, error estimates are proved, and confirmed by numerical examples. Numerical computations also indicate that the geometric assumptions on the magnetic field, which permit to prove the Poincaré-type inequality, are needed to ensure an invertible matrix at each time step.


\appendix
\input{appendix}


\section*{Acknowledgments}

The authors are especially grateful to Paolo Ricci for securing the project's funding and for his invaluable guidance throughout this work. Further, the authors would like to thank Pascal Azerad, Micol Bassanini, and Joachim Krieger for fruitful discussions and their valuable suggestions, which helped to improve the quality of the paper.

\bibliographystyle{siamplain}
\bibliography{references}
\end{document}

%% file: ex_shared.tex
\usepackage{lipsum}
\usepackage{amsfonts}
\usepackage{graphicx}
\usepackage{epstopdf}
\usepackage{algorithmic}
\ifpdf
  \DeclareGraphicsExtensions{.eps,.pdf,.png,.jpg}
\else
  \DeclareGraphicsExtensions{.eps}
\fi

\newsiamremark{remark}{Remark}
\newsiamremark{hypothesis}{Hypothesis}
\crefname{hypothesis}{Hypothesis}{Hypotheses}
\newsiamthm{claim}{Claim}
\newsiamremark{fact}{Fact}
\crefname{fact}{Fact}{Facts}

\title{Well-posedness and finite element approximation of the electrostatic shear Alfvén wave equations\thanks{Submitted to the editors \today.
\funding{
This work has been carried out within the framework of the EUROfusion Consortium, via the Euratom Research and Training Programme (Grant Agreement No 101052200 — EUROfusion) and funded by the Swiss State Secretariat for Education, Research and Innovation (SERI). Views and opinions expressed are however those of the author(s) only and do not necessarily reflect those of the European Union, the European Commission, or SERI. Neither the European Union nor the European Commission nor SERI can be held responsible for them.
}}}

\author{Annalisa Buffa\thanks{Institute of Mathematics, EPFL, Lausanne Switzerland
  (
  \email{annalisa.buffa@epfl.ch},
  \email{timon.miehling@epfl.ch},
  \email{marco.picasso@epfl.ch},
  \email{markus.renoldner@epfl.ch}, \url{https://www.epfl.ch/schools/sb/research/math/}).}
\and Timon Miehling\footnotemark[2]
\and Marco Picasso\footnotemark[2]
\and \hspace{2cm}Markus Renoldner\footnotemark[2]
}

\usepackage{amsopn}

\usepackage{comment}

\newcommand{\RR}{\mathbb{R}}
\newcommand{\ZZ}{\mathbb{Z}}

\renewcommand{\div}{\operatorname{div}}

\renewcommand{\b}{\mathbf{b}}
\newcommand{\B}{\mathbf{B}}
\newcommand{\e}{\mathbf{e}}
\newcommand{\V}{\mathbf{V}}

\newcommand{\CP}{C_P}
\newcommand{\R}{\mathbb{R}}
\newcommand{\N}{\mathbb{N}}
\DeclareMathOperator{\Lip}{\operatorname{Lip}}

\DeclareMathOperator{\Diff}{D}
\DeclareMathOperator{\fdot}{\; \cdot \;}

\DeclareMathOperator{\dt}{d\textit{t}}
\DeclareMathOperator{\ds}{d\textit{s}}

\newcommand{\Hzero}{H_{0}^1(\Omega)}

\newcommand{\HL}{H^1(0,T;L^2(\Omega))}
\newcommand{\Ltwo}{L^2(\Omega)}
\DeclareMathOperator{\nablaperp}{\nabla_{\perp}}

\newcommand{\inner}[2]{\left\langle #1, #2 \right\rangle}

\newcommand{\innerHzero}[2]{\left\langle #1, #2 \right\rangle_{\Hzero}}
\newcommand{\norm}[1]{\left\| #1 \right\|}
\newcommand{\normL}[1]{\left\| #1 \right\|_{\Ltwo}}

\newcommand{\normHzero}[1]{\left\| #1 \right\|_{\Hzero}}

\newcommand{\normHL}[1]{\left\| #1 \right\|_{\HL}}

%% file: Alfven_waves_on_the_Tokamak.tex
\section{The Alfvén wave equations}
\label{sec:alfvenwaveequations}
Let $n \geq 2$ and let $\Omega\subseteq\RR^n$ be an open and bounded set with Lipschitz boundary. Furthermore, let $U \subseteq \R^n$ be an open subset such that $\overline{\Omega}\subset U$ and let $\B:U \to \R^n$ denote the magnetic field, which satisfies $\left\lVert \B \right\rVert\neq 0$ and $\operatorname{div}\B=0$. Finally, denote by $\b$ the normalised magnetic field, i.e., $\b:=\frac{\B}{\left\lVert \B \right\rVert}$, for which we assume $\b\in C^1(U,\R^n)$. We note that certain geometric assumptions on $\b$ have to be imposed in order to establish well-posedness; this will be discussed later. In the following we introduce the strong form of the Alfvén wave equations \cite{stasiewicz_2000,jolliet_2014,bassanini_2024}. We seek functions $u,\phi:\overline{\Omega} \times [0,T] \to \mathbb{R}$, that satisfy
\begin{alignat}{2}
    \label{eq:alfven-strong1}
    \mu \, \partial_t u - \b \cdot\nabla \phi &= f\qquad && \text{in } \Omega\times (0,T),\\
    \label{eq:alfven-strong2}
    \partial_t \operatorname{div} (\nablaperp \phi) + \operatorname{div}(\b u) &= g\qquad && \text{in } \Omega\times (0,T), \\
    \label{eq:alfven-strongBC}
    \phi&=0         \qquad && \text{in }\partial\Omega\times [0,T],\\
    \label{eq:alfven-strongIC1}
    u(\cdot,0) &= u_0        \qquad && \text{in } \overline{\Omega},\\
    \label{eq:alfven-strongIC2}
    \phi(\cdot,0) &= \phi_0  \qquad && \text{in } \overline{\Omega},
\end{alignat}
where $\mu>0$ is a given, fixed parameter describing the ratio between electron and ion mass, and $f,g$ are given functions on $\Omega \times (0,T)$ and the \emph{perpendicular gradient} operator $\nabla_\perp$ is defined as 
\begin{equation}
    \label{eq:perpgrad}
    \nabla_\perp\phi 
    :=\nabla\phi - \b (\b \cdot\nabla\phi)
    =(\mathrm{Id}-\b\b^\top)\nabla\phi
\end{equation}
and describes the projection of the gradient to the orthogonal complement of $\b$. The function $u$ describes the component of the electron velocity parallel to $\b $, $\phi$ describes the electrostatic potential, and $T>0$ denotes the final time. With $u_0,\phi_0:{\Omega} \to \mathbb{R}$ we denote the initial conditions.


We would like to remark that in fusion plasma physics, the domain of Alfvén waves are fusion reactors, such as \emph{tokamaks} or \emph{stellarators}, as explained in \cite{romanelli_collisional_transport_tokamak, Reimerdes2022TCV, Wesson2004Tokamaks}. Both are toroidal confinement devices, in which the magnetic field has a dominant component in the \emph{toroidal} direction, meaning that 
\begin{equation}
    \label{eq:global_z}
    \b(x) \cdot \e_\theta(x)> 0,
\end{equation}
where $\{\e_\rho,\e_\theta,\e_z\}$ is the cylindrical basis. It turns out that this condition is sufficient to establish well-posedness on these domains. We will first analyse the system \eqref{eq:alfven-strong1}-\eqref{eq:alfven-strongIC2} on the general domain $\Omega$, introduced in the beginning of this section, and then show explicitly that the theory applies in particular to tokamaks and stellarators, see section \ref{sec:physicaleq}.

\subsection{Alfvén waves in the Braginskii equations}
The Alfvén wave equations, as formulated in equations \eqref{eq:alfven-strong1}-\eqref{eq:alfven-strongIC2}, were first introduced as a simplified sub-model of the drift-reduced Braginskii equations in \cite{stasiewicz_2000,jolliet_2014,bassanini_2024}. This model emerges as a limit case of the drift-reduced Braginskii framework under the assumptions of cold plasma, dominant ion mass compared to the electron mass, static magnetic field, and constant plasma density. In particular, one can identify the four terms from equations \eqref{eq:alfven-strong1}-\eqref{eq:alfven-strongIC2} in equations (18) and (19) of \cite{halpern_gbs_2016}. 

A common simplification in the physics literature for the drift-reduced Braginskii equations is to assume $\nabla \cdot \mathbf{b} = 0$, which leads to terms of the form $\mathbf{b} \cdot \nabla u$ rather than $\nabla \cdot (\mathbf{b} u)$ in equation \eqref{eq:alfven-strong2}. Here, we avoid imposing this assumption.

\subsection{An a priori bound}
\label{sec:AprioriBound}
One key observation for the well-posedness of the Alfvén waves is the following conservation law when $f=g=0$. Multiplying \eqref{eq:alfven-strong1} with $u$ and \eqref{eq:alfven-strongIC2} with $\phi$ and integrating over $\Omega$ yields
$$\frac{\mathrm{d}}{\mathrm{d}t} \left ( \mu \Vert u\Vert_{L^2(\Omega)}^2 + \Vert\nabla_\perp \phi\Vert_{L^2(\Omega)}^2\right) = 0.$$
From this equation, the first natural question would be whether it is possible to control $\phi$ only through its perpendicular gradient via a Poincaré-type inequality. Such an inequality is introduced and proved in appendix \ref{sec:spaces}.
In order to prove existence of solutions, this stability is not enough, however. Indeed, controlling merely $\nabla_\perp\phi$ is not enough to meaningfully define a weak form of the second term in equation \eqref{eq:alfven-strong1} and the second term in equation \eqref{eq:alfven-strong2}. One needs either $\phi$ to admit derivatives not only perpendicular, but also along $\b$, or alternatively, $\div (\b u)$ to be integrable. The following formal argument shows, that the first option is more natural. Applying $\b\cdot\nabla$ to equation \eqref{eq:alfven-strong1} and $\partial_t$ to equation \eqref{eq:alfven-strong2}, and eliminating $u$ yields
\begin{align}
\label{eq:coupledStrong}
    \mu \partial_{tt}\div\nabla_\perp\phi + \div [\b(\b\cdot\nabla \phi)]= \mu \partial_t g - \div (\b f).
\end{align}
This equation will be utilized to derive energy estimates for the term $\b\cdot\nabla\phi$, yielding full $H^1(\Omega)$ regularity for that function.

%% file: Well-posedness.tex
\section{Well-posedness}  
\label{sec:wellposedness}
In this section, we propose a weak formulation of the Alfvén wave equations \eqref{eq:alfven-strong1}-\eqref{eq:alfven-strongIC2} and prove its well-posedness. To make sense of this weak formulation, we first introduce suitable function spaces. Equations \eqref{eq:alfven-strong1}-\eqref{eq:alfven-strongIC2} suggest that we should seek solutions in Sobolev spaces whose weak differentiability properties depend on the vector field $\b$. We then present the weak formulation of the Alfvén wave problem and state the main well-posedness result. Finally, we show that this result applies to the tokamak and stellarator cases.

\subsection{Well-posedness of the shear Alfvén waves} 
\label{sec:well-posedness}
The following defines the special vector-field-dependent Sobolev spaces required for the weak formulation of the Alfvén wave equations. For a more general presentation of these spaces, we refer to \hyperref[sec:spaces]{Appendix \ref{sec:spaces}}.
\begin{definition}[The spaces $H^1_{\b}(\Omega)$ and $H^1_\perp(\Omega)$]
    \label{def:perpgrad}
    Let $\Omega \subseteq \R^n$ be an open set and let $\b \in \Lip(\Omega)^n$ be a Lipschitz vector field. We define the spaces 
    \begin{align*}
        H^1_{\b}(\Omega) &:= \left \{ v \in L^2(\Omega) \mid \b \cdot \nabla v \in L^2(\Omega) \right \}, \\
        H^{1}_\perp(\Omega) &:= \left \{ v \in L^2(\Omega) \mid \nablaperp v \in L^2(\Omega)^n \right \}.
    \end{align*}
    Further, we define $H^{1}_{\perp,0}(\Omega)$ as the closure of $\mathcal{D}(\Omega)$ in $H^{1}_\perp(\Omega)$ with the natural choice of norm.
\end{definition}
\begin{remark}
    In case $\b$ can be completed to an orthonormal Lipschitz frame $\b_1,\ldots, \b_n \in \Lip(\Omega)^n$ with $\b_1 := \b$, we have that for each $v \in H^{1}_\perp(\Omega)$ there exist functions $f_i \in L^2(\Omega)$ for $i=2,\ldots,n$ such that we can express the perpendicular gradient as $\nablaperp v = \sum_{i=2}^n f_i \b_i$.
\end{remark}
Assuming $\mu = 1$ for simplicity, we multiply equation \eqref{eq:alfven-strong1} by a test function $v \in L^2(\Omega)$ and equation \eqref{eq:alfven-strong2} by $\psi \in H^1_0(\Omega)$. Integrating over $\Omega$ and performing integration by parts in the second equation leads to the following weak formulation.
\begin{definition}[Weak formulation of the Alfvén wave equation]
    \label{def:alfven-wave_weak}
    Let $n \geq 2$, $\Omega \subseteq \R^n$ be a bounded domain with Lipschitz boundary, $\b \in \Lip(\Omega)^n$ and let $T>0$ denote the final time. We seek $u \in H^1(0, T; L^2(\Omega))$ and $\phi \in L^2(0, T; H^1_0(\Omega)) \cap H^1(0, T; H^1_{\perp,0}(\Omega))$ such that for all $v \in L^2(\Omega)$, $\psi \in H^1_0(\Omega)$, and a.e. $t \in [0,T]$, it holds that
    \begin{align}
        \label{eq:alfven-weak1}
        \int_\Omega u' v - \int_\Omega v \, \b \cdot\nabla \phi   &= \int_\Omega f \, v, \\
        \label{eq:alfven-weak2}
        -\int_\Omega \nabla_\perp \phi'\cdot \nabla_\perp \psi  - \int_\Omega u \, \b \cdot\nabla \psi &= \int_\Omega g \, \psi,
    \end{align}
    and $u(0) = u_0$, $\phi(0) = \phi_0$, where we assume that $f \in L^2(0, T; L^2(\Omega))$,\\ $g \in L^2(0, T; H^{-1}(\Omega))$, $u_0 \in L^2(\Omega)$, $\phi_0 \in H^1_0(\Omega)$. We interpret the integral on the right hand side of \eqref{eq:alfven-weak2} as a duality-pairing.
\end{definition}
The following Poincaré-type inequality is a key ingredient for the well-posedness theorem below, and requires a specific geometric condition on $\b$.
\begin{corollary}[Poincaré-type inequality for $H_{\perp,0}^1$]
\label{CorollaryPoincare}
    Let $n \geq 2$, let $\Omega \subseteq \R^n$ be a bounded domain with Lipschitz boundary, and let $\b \in \Lip(\Omega)^n$. Suppose that there exists an orthonormal frame $\b_1,\ldots,\b_n \in \Lip(\Omega)^n$ such that $\b_1=\b$ and further such that the last frame vector $\b_n \in C^1(\R^n;\R^n)$ has bounded differential and is \emph{globally directed}, i.e., there exist $w \in \R^n$ and $\alpha > 0$ such that
    \begin{align}\label{eq:GloballyDirected}
        \b_n(x) \cdot w \geq \alpha > 0.
    \end{align}
    Then there exists $\CP>0$ depending on $\Omega$, $\b_n$ and $\alpha$ such that
    \begin{align}
    \label{eq:poincare_H1perp}
        \|v\|_{L^2(\Omega)} \leq \CP \| \nablaperp v\|_{L^2(\Omega)} \text{  for all  } v \in H^1_{\perp,0}(\Omega).
    \end{align}
\end{corollary}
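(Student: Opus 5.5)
The plan is to prove a one-dimensional Poincaré inequality along the integral curves of the globally directed field $\b_n$, and then integrate it transversally. The perpendicular gradient controls $\b_n\cdot\nabla v$, because $\b_n \perp \b_1 = \b$ gives
\[
\b_n\cdot\nabla v = \b_n\cdot\nablaperp v ,
\qquad
|\b_n\cdot\nabla v| \le |\nablaperp v| \quad \text{pointwise.}
\]
It therefore suffices to show
\[
\|v\|_{L^2(\Omega)} \le \CP \|\b_n\cdot\nabla v\|_{L^2(\Omega)}
\]
for $v\in\mathcal{D}(\Omega)$; the claim for $H^1_{\perp,0}(\Omega)$ then follows by density, since both sides are continuous in the $H^1_\perp$ norm.

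\textbf{Step 1 (flow and finite exit time).} Let $\Phi_s$ be the flow of $\b_n$. It is globally defined on $\R^n$ because $\b_n\in C^1$ with bounded differential, hence globally Lipschitz. Along a trajectory,
\[
\frac{\mathrm d}{\mathrm ds}\bigl(\Phi_s(x)\cdot w\bigr)=\b_n(\Phi_s(x))\cdot w\ge\alpha .
\]
Since $\Omega$ is bounded, $x\cdot w$ ranges over an interval of length at most $L:=\operatorname{diam}(\Omega)|w|$. Consequently every trajectory spends total time at most $L/\alpha$ inside $\Omega$, and it leaves $\overline\Omega$ in backward time within $L/\alpha$.

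\textbf{Step 2 (1D estimate along each curve).} For $v\in\mathcal D(\Omega)$, extended by zero, write
\[
v(x)=\int_{-L/\alpha}^0 (\b_n\cdot\nabla v)(\Phi_s(x))\,\mathrm ds .
\]
Cauchy–Schwarz then gives
\[
|v(x)|^2\le \frac{L}{\alpha}\int_{-L/\alpha}^0 |(\b_n\cdot\nabla v)(\Phi_s(x))|^2\,\mathrm ds .
\]

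\textbf{Step 3 (integrate over $\Omega$).} Integrate the bound of Step 2 over $x\in\Omega$ and use Fubini. The remaining task is to bound
\[
\int_\Omega |h(\Phi_s(x))|^2\,\mathrm dx \qquad\text{with } h=\b_n\cdot\nabla v\ \text{(supported in }\Omega).
\]
Changing variables $y=\Phi_s(x)$ gives the Jacobian factor $|\det D\Phi_{-s}(y)|\le e^{|s|\,\|\div \b_n\|_\infty}$ by Liouville's formula. The divergence is bounded since $D\b_n$ is bounded. Hence
\[
\|v\|_{L^2}^2\le \frac{L^2}{\alpha^2}\,e^{L\|D\b_n\|_\infty\sqrt n/\alpha}\,\|\b_n\cdot\nabla v\|_{L^2}^2 ,
\]
so $\CP$ depends only on $\Omega$, $\b_n$ and $\alpha$, as claimed.

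The main obstacle is justifying the density step. This requires checking that $\nablaperp$, taken in the distributional sense with Lipschitz $\b$, is continuous so that the limit passes correctly, and that the identity $\b_n\cdot\nabla v=\b_n\cdot\nablaperp v$ survives for $v\in H^1_{\perp,0}$. Since $H^1_{\perp,0}(\Omega)$ is defined as the closure of $\mathcal D(\Omega)$, this reduces to the smooth case plus $L^2$-continuity of $v\mapsto \b_n\cdot\nablaperp v$, which holds by definition of the graph norm. The only delicate point left is the Jacobian bound in Step 3, which I would handle via Liouville's formula as above.
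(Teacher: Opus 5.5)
Your proof is correct, but it takes a different route from the paper's. The paper gets the corollary as a special case of Theorem~\ref{thm:PoincareW1pC0}. That proof first builds a global straightening chart $\Phi(t,s)=\Theta(t,\sum_i s_i w_i)$ for $\b_n$ in Proposition~\ref{lem:diffeomorphism}, showing it is an immersion and a bijection via the monotonicity of $t\mapsto\Theta(t,x)\cdot w$. It then changes variables to a box $B^n\supset\Phi^{-1}(\overline\Omega)$, applies the one-dimensional Poincar\'e inequality on each fibre $W^{1,p}_0(m_1,M_1)$, and changes back. The constants $\|J_\Phi\|_{L^\infty(B^n)}$ and $\|J_{\Phi^{-1}}\|_{L^\infty(\Omega)}$ are left implicit.

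You use the same monotonicity only to get a uniform backward exit time $L/\alpha$. In place of the straightening chart you use the fundamental theorem of calculus along trajectories, Fubini in $(s,x)$, and Liouville's formula for the time-$s$ flow maps $\Phi_s$, which are diffeomorphisms by standard ODE theory. What this buys you:
\begin{itemize}
\item You avoid the global injectivity and surjectivity argument entirely.
\item You get the explicit constant $\CP\le\frac{L}{\alpha}\exp\bigl(\frac{L\|\div\b_n\|_{L^\infty}}{2\alpha}\bigr)$ with $L=\operatorname{diam}(\Omega)|w|$, which makes the paper's closing remark (blow-up as $\alpha\to0$ or for large $\Omega$) quantitative.
\item You state explicitly the reduction $|\b_n\cdot\nabla v|=|\b_n\cdot\nablaperp v|\le|\nablaperp v|$ on $\Omega$, which the paper hides inside ``direct consequence''.
\end{itemize}
In exchange, the paper's route yields a reusable global flow-box chart and is phrased for general $p$ and families $\mathcal{C}$. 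Your argument extends to $L^p$ verbatim, with H\"older in place of Cauchy--Schwarz.

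Finally, the density step is easier than your last paragraph suggests. The identity $\b_n\cdot\nabla v=\b_n\cdot\nablaperp v$ does not need to survive the limit. You prove $\|v\|_{L^2}\le\CP\|\nablaperp v\|_{L^2}$ on $\mathcal{D}(\Omega)$, and both sides are continuous in the $H^1_\perp$ norm by definition.
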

\begin{proof}
    This is a direct consequence of \hyperref[thm:PoincareW1pC0]{Theorem \ref{thm:PoincareW1pC0}}.
\end{proof}
\begin{remark}
    In the following, we write $\|\fdot\|_{H^1_0(\Omega)}$ to denote $\|\nabla\fdot\|_{L^2(\Omega)}$, and $\|\fdot\|_{H^1_{\perp,0}(\Omega)}$ to denote $\|\nablaperp\fdot\|_{L^2(\Omega)}$. Both expressions define norms that are equivalent to the standard $H^1(\Omega)$ and $H^1_\perp(\Omega)$ norms on their respective subspaces, provided a Poincaré-type inequality holds.
\end{remark}
In the subsequent section \ref{sec:physicaleq} we show that these conditions are satisfied in the tokamak case and that the main result applies to this case.
\begin{theorem}[Well-posedness of the Alfvén wave equation]
    \label{thm:well-posedness}
    Under the hypotheses of \hyperref[CorollaryPoincare]{Corollary \ref{CorollaryPoincare}}, and assuming additionally that $u_0 \in H^1(\Omega)$, $g \in H^1(0,T;L^2(\Omega))$, and $f \in H^1(0,T;L^2(\Omega))$ or $f \in L^2(0,T;H^1_{\b}(\Omega))$, the problem in Definition \ref{def:alfven-wave_weak} is well-posed, that is, there exists a unique solution.
    \begin{align*}
        (u,\phi) \in H^1(0,T;L^2(\Omega)) \times \left ( L^2(0,T;H^1_0(\Omega))\cap H^1(0,T;H^1_{\perp,0}(\Omega)) \right )
    \end{align*}
    such that for all $v\in L^2(\Omega)$, $\psi\in H^1_0(\Omega)$ and a.e. $t \in [0,T]$ the equations \eqref{eq:alfven-weak1}-\eqref{eq:alfven-weak2} hold, and, moreover, there exists a constant $C_0>0$ depending only on $\Omega$, $\b$, $\alpha > 0$ and $T > 0$ such that we have
    \begin{align}
        \label{eq:AWStab}
        \left\|u\right\|_{H^1\left(0, T ; L^2(\Omega)\right)}+ &\left\|\phi\right\|_{L^2\left(0, T ; H_0^1(\Omega)\right)}  
        +\left\|\phi^{\prime}\right\|_{L^2\left(0, T ; H_{\perp, 0}^1(\Omega)\right)} \notag \\
        & \leq C_0 \left(\left\|u_0\right\|_{H^1(\Omega)}
        +\left\|\phi_0\right\|_{H_0^1(\Omega)}
        +\|f\|_X
        +\|g\|_{H^1\left(0, T ; L^2(\Omega)\right)}\right),
    \end{align}
    where $X$ is the space of $H^1(0,T;L^2(\Omega))$ or $L^2(0,T;H^1_{\b}(\Omega))$ depending on the assumption on $f$.
\end{theorem}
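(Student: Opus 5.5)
We follow the Faedo--Galerkin method. Fix a basis $\{w_k\}_{k\ge1}$ of $H^1_0(\Omega)$ that is orthonormal in $L^2(\Omega)$, for instance eigenfunctions of the Dirichlet Laplacian. Take a basis $\{v_k\}$ of $L^2(\Omega)$ and set $V_m=\mathrm{span}\{w_1,\dots,w_m\}$ and $W_m=\mathrm{span}\{v_1,\dots,v_m\}$. We seek $u_m(t)\in W_m$ and $\phi_m(t)\in V_m$ satisfying \eqref{eq:alfven-weak1}--\eqref{eq:alfven-weak2} for all test functions in $W_m\times V_m$. Convenient choices are $v_k=w_k$, or $W_m=\b\cdot\nabla V_m+V_m$. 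The initial data are $L^2$- resp. $H^1_0$-projections of $u_0,\phi_0$. The Galerkin system is a linear ODE system in which the matrix $(\nablaperp w_j,\nablaperp w_k)_{L^2}$ multiplies $\phi_m'$. This matrix is symmetric positive definite because, by Corollary \ref{CorollaryPoincare}, $\nablaperp\psi=0$ forces $\psi=0$ in $H^1_{\perp,0}(\Omega)$. Hence the system is a regular linear ODE with $L^2$-in-time forcing, and it has a unique absolutely continuous solution on $[0,T]$.

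\emph{First energy estimate.} Test with $v=u_m$ and $\psi=\phi_m'$ (admissible if $W_m\supseteq\b\cdot\nabla V_m$, or after projection) and add the equations. The coupling terms $\int u_m\,\b\cdot\nabla\phi_m'$ and $\int u_m'\,\ldots$ must be arranged to cancel. We instead use $v=u_m$, $\psi=\phi_m$ together with the time-differentiated structure, which gives the conservation law of Section \ref{sec:AprioriBound}:
\[
\tfrac12\tfrac{d}{dt}\big(\|u_m\|^2+\|\nablaperp\phi_m\|^2\big)=\int f u_m-\langle g,\phi_m\rangle .
\]
By Corollary \ref{CorollaryPoincare} and Gronwall this controls $u_m$ in $L^\infty(L^2)$ and $\phi_m$ in $L^\infty(H^1_{\perp,0})$.

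\emph{Second estimate (the key step).} We need control of $\b\cdot\nabla\phi_m$ in $L^2(L^2)$ and of $u_m'$. Mimicking \eqref{eq:coupledStrong}, differentiate the discrete version of \eqref{eq:alfven-weak2} in time; this requires $g\in H^1(0,T;L^2)$. Then test the differentiated equation with $\psi=\phi_m'$ and the first equation, differentiated or not, with $v=u_m'$. This yields
\[
\tfrac12\tfrac{d}{dt}\big(\|u_m'\|^2+\|\nablaperp\phi_m'\|^2\big)=\int f'u_m'-\langle g',\phi_m'\rangle
\]
when $f\in H^1(L^2)$. In the alternative case $f\in L^2(H^1_\b)$ the term is handled via $\int f\,\b\cdot\nabla\psi$ after integrating by parts. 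The initial values $u_m'(0)$ and $\nablaperp\phi_m'(0)$ are bounded through the equations at $t=0$. This is where $u_0\in H^1$ (so that $\b\cdot\nabla\psi$ paired with $u_0$ is controlled) and $\phi_0\in H^1_0$ enter: $\|u_m'(0)\|\le\|f(0)\|+\|\b\cdot\nabla\phi_{0}\|$, and $\nablaperp\phi_m'(0)$ is determined by $\int u_0\,\b\cdot\nabla\psi$, which by integration by parts is bounded by $C\|u_0\|_{H^1}\|\psi\|_{L^2}\le C\CP\|u_0\|_{H^1}\|\nablaperp\psi\|$. Gronwall then bounds $u_m'$ in $L^\infty(L^2)$ and $\phi_m'$ in $L^\infty(H^1_{\perp,0})$. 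Returning to \eqref{eq:alfven-weak1} with $v=\b\cdot\nabla\phi_m$ (projected onto $W_m$, which is why we choose $W_m\supseteq\b\cdot\nabla V_m$) gives $\|\b\cdot\nabla\phi_m\|\le\|u_m'\|+\|f\|$. Combining this with $\nablaperp\phi_m$ yields $\phi_m$ bounded in $L^2(H^1_0)$. I expect the main obstacle to be this step: making the discrete time-differentiation and the compatible choice of $W_m,V_m$ rigorous, and bounding the discrete initial data uniformly in $m$.

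\emph{Passage to the limit and uniqueness.} The uniform bounds give weakly convergent subsequences: $u_m\rightharpoonup u$ in $H^1(0,T;L^2)$, and $\phi_m\rightharpoonup\phi$ in $L^2(H^1_0)\cap H^1(H^1_{\perp,0})$. The equations are linear, so we pass to the limit against $\chi(t)w_k$ and use density to recover \eqref{eq:alfven-weak1}--\eqref{eq:alfven-weak2}. The initial conditions follow from the continuous embedding $H^1(0,T;\cdot)\subset C([0,T];\cdot)$. Weak lower semicontinuity transfers the bounds to \eqref{eq:AWStab}. For uniqueness, the difference of two solutions solves the homogeneous problem with zero data. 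The regularity of weak solutions justifies testing with $v=u$ and $\psi=\phi$ in the time-integrated form (equivalently, $\psi=\phi'$ is admissible in the sense of $H^1_{\perp,0}$ after a density argument). The energy identity then gives $\|u\|^2+\|\nablaperp\phi\|^2\equiv0$, and Corollary \ref{CorollaryPoincare} yields $\phi=0$.
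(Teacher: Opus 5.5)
Your argument is correct once you commit to the compatible choice $W_m\supseteq\b\cdot\nabla V_m$, but it is organized differently from the paper's.

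\textbf{How the two routes differ.} The paper never discretizes $u$. It eliminates $u$ and solves the second-order problem \eqref{eq:decoupled_findim}--\eqref{eq:decoupled_findim_IC} for $\phi_N$. It then reconstructs $u_N=u_0+\int_0^t(f+\b\cdot\nabla\phi_N)$ exactly in $H^1(0,T;L^2(\Omega))$ and checks afterwards that the pair solves the coupled system. Its energy is $\|\nablaperp\phi_N'\|^2+\|\b\cdot\nabla\phi_N\|^2$, obtained by testing with $\psi=\phi_N'$. You instead keep a coupled Galerkin system, estimate $\|u_m'\|^2+\|\nablaperp\phi_m'\|^2$ from the time-differentiated system, and recover $\b\cdot\nabla\phi_m$ from the first equation.

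The two discretizations actually coincide. Since $\b\cdot\nabla\psi\in W_m$ for $\psi\in V_m$, the second equation sees $u_m$ only through $\int_\Omega u_m\,\b\cdot\nabla\psi=\int_\Omega (u_0+\int_0^t(f+\b\cdot\nabla\phi_m))\,\b\cdot\nabla\psi$. Hence, with the eigenfunction basis, your $\phi_m$ is the paper's $\phi_N$ and $u_m=P_{W_m}u_N$, where $P_{W_m}$ is the $L^2$-orthogonal projection onto $W_m$. Only the energy differs. Yours makes the case $f\in H^1(0,T;L^2(\Omega))$ cleaner: the forcing enters as $\int_\Omega f'u_m'$, with no indefinite term $\int_\Omega f\,\b\cdot\nabla\phi_N$ to absorb. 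The paper's elimination keeps $u_N(0)=u_0$ exactly and its energy never involves $u'$. That makes the bound on $\nablaperp\phi_N'(0)$ and the case $f\in L^2(0,T;H^1_{\b}(\Omega))$ direct.

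\textbf{Three details to fix.}
\begin{itemize}
\item \emph{Drop the option $v_k=w_k$.} With $W_m=V_m$ the first equation controls only $P_{V_m}(\b\cdot\nabla\phi_m)$. Also, $\int_\Omega u_m(0)\,\b\cdot\nabla\psi$ can no longer be replaced by $\int_\Omega u_0\,\b\cdot\nabla\psi$. The Dirichlet spectral truncation of $u_0\in H^1(\Omega)\setminus H^1_0(\Omega)$ is unbounded in $H^1(\Omega)$, so your integration-by-parts bound on $\nablaperp\phi_m'(0)$ no longer applies. That bound rests precisely on the identity $\int_\Omega P_{W_m}u_0\,\b\cdot\nabla\psi=\int_\Omega u_0\,\b\cdot\nabla\psi$, i.e.\ on $\b\cdot\nabla V_m\subseteq W_m$, and you should state this.
\item \emph{The case $f\in L^2(0,T;H^1_{\b}(\Omega))$.} Here there is no $f'$ and $u_m'(0)$ is undefined, so your differentiated identity is unavailable. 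Substitute $u_m'=P_{W_m}(f+\b\cdot\nabla\phi_m)$ into the differentiated second equation and test with $\phi_m'$. Since $\b\cdot\nabla\phi_m'\in W_m$, this reproduces \eqref{eq:decoupled_findim_norm}, i.e.\ you switch to the paper's energy. Then integrate $\int_\Omega f\,\b\cdot\nabla\phi_m'$ by parts as you indicate.
\item \emph{The uniqueness parenthetical.} The remark about $\psi=\phi'$ is wrong: $\phi'$ lies only in $H^1_{\perp,0}(\Omega)$, so $\int_\Omega u\,\b\cdot\nabla\phi'$ is undefined. It is also unnecessary, because $\psi=\phi(t)\in H^1_0(\Omega)$ is admissible and gives the energy identity you need.
\end{itemize}
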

Well-posedness is established in several steps. We begin by proving existence, which is the most involved part. To this end, we eliminate $u$ and derive a decoupled equation for $\phi$ as an analogue of \eqref{eq:coupledStrong}. We show that this decoupled equation admits a unique solution in finite dimensions and then recover a solution to the original problem in finite dimensions from the solution of the decoupled problem. Next, we use a Galerkin-type argument to obtain the existence of a solution. Uniqueness and stability are proved subsequently. 

To define a sequence of appropriate finite-dimensional spaces, we use the eigenfunctions of the operator $-\Delta$. These are denoted by $(w_k)_{k \in \mathbb{N}}$ and form an orthonormal basis of $L^2(\Omega)$ and an orthogonal basis of $H^1_0(\Omega)$ under the given assumptions on $\Omega \subseteq \R^n$. We denote by $W_N := \operatorname{span}(w_k)_{k=1}^N$ the subspace spanned by the first $N \in \mathbb{N}$ eigenfunctions. The finite-dimensional decoupled Alfvén wave equation is an analogue of equation \eqref{eq:coupledStrong} and is given as follows.
\begin{definition}[Decoupled finite dimensional]
    \label{def:decoupled_alfven_wave}
     The finite-dimensional decoupled Alfvén wave equation is given by, find $\phi_N \in H^2(0, T; W_N)$ such that for a.e. $t \in [0,T]$ and all $\psi \in W_N$, it holds that
    \begin{equation}
        \label{eq:decoupled_findim}
        \int_{\Omega}\nabla_{\perp} \phi_N^{\prime \prime}\cdot \nabla_{\perp} \psi + \int_{\Omega}(\b\cdot\nabla\phi_N )(\b\cdot\nabla \psi) = -\int_{\Omega} f \, \b\cdot \nabla \psi - \int_{\Omega} g^{\prime} \psi,
    \end{equation}
    $\phi_N(0)=\sum_{k=1}^N w_k\int_\Omega \phi_0 w_k$ and $\phi_N'(0)$ is such that
    \begin{equation}
        \label{eq:decoupled_findim_IC}
        -\int_{\Omega} \nabla_{\perp} \phi_N^{\prime}(0)\cdot \nabla_{\perp} \psi-\int_{\Omega}u_0 \, \b\cdot\nabla \psi =\int_{\Omega} g(0) \, \psi \quad \forall \psi \in W_N.
    \end{equation}
\end{definition}
We remark, that the initial condition $\phi_N'(0)$ has to satisfy \eqref{eq:decoupled_findim_IC} precisely such that the right solution $\phi_N$ is selected, which later solves the \emph{full} Alfven wave system.
\begin{lemma}\label{lem:decoupled_existence}
    There exists a unique solution $\phi_N\in H^2 (0, T ; W_N )$ to the problem from Definition \ref{def:decoupled_alfven_wave}.
\end{lemma}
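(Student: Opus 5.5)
We reduce the problem to a linear second-order ODE system in $\R^N$. Write $\phi_N(t)=\sum_{k=1}^N c_k(t)w_k$ and test \eqref{eq:decoupled_findim} with $\psi=w_j$, $j=1,\dots,N$. This yields
\begin{equation*}
M c''(t) + K c(t) = F(t), \qquad M_{jk}:=\int_\Omega \nablaperp w_k\cdot\nablaperp w_j,\quad K_{jk}:=\int_\Omega (\b\cdot\nabla w_k)(\b\cdot\nabla w_j),
\end{equation*}
with $F_j(t):=-\int_\Omega f(t)\,\b\cdot\nabla w_j-\int_\Omega g'(t)\,w_j$. Since $w_j\in H^1_0(\Omega)$ and $\b$ is bounded, $F\in L^2(0,T;\R^N)$ under the standing assumptions $f\in L^2(0,T;L^2(\Omega))$ and $g\in H^1(0,T;L^2(\Omega))$. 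The initial data are $c(0)_k=\int_\Omega\phi_0 w_k$, and \eqref{eq:decoupled_findim_IC} tested with $w_j$ gives the linear system $M c'(0)=-G$, where $G_j:=\int_\Omega u_0\,\b\cdot\nabla w_j+\int_\Omega g(0)w_j$. Here $g(0)$ is well defined because $H^1(0,T;L^2(\Omega))\hookrightarrow C([0,T];L^2(\Omega))$.

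The key step, and the only genuinely non-routine one, is the invertibility of the Gram matrix $M$. Since $M$ is symmetric positive semidefinite, it suffices to show $\xi^\top M\xi>0$ for $\xi\neq0$. If $\xi^\top M\xi=\|\nablaperp v\|_{L^2(\Omega)}^2=0$ for $v=\sum_k\xi_k w_k$, then $v\in H^1_0(\Omega)\subseteq H^1_{\perp,0}(\Omega)$, because $H^1_0$-convergence of test functions implies $H^1_\perp$-convergence as $|\nablaperp\cdot|\le|\nabla\cdot|$. Corollary \ref{CorollaryPoincare} then gives $\|v\|_{L^2(\Omega)}\le \CP\|\nablaperp v\|_{L^2(\Omega)}=0$. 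Since the $w_k$ are $L^2$-orthonormal, this forces $\xi=0$. Hence $M$ is symmetric positive definite, so $c'(0)$ is uniquely determined, and the system can be rewritten as $c''=-M^{-1}Kc+M^{-1}F$.

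This is a linear constant-coefficient system with an $L^2$ forcing term. Writing it as a first-order system for $(c,c')$ and applying the variation-of-constants formula (or Carathéodory theory for linear ODEs) gives a unique absolutely continuous solution with $c'$ absolutely continuous and $c''=-M^{-1}Kc+M^{-1}F\in L^2(0,T)$. Therefore $c\in H^2(0,T;\R^N)$, i.e.\ $\phi_N\in H^2(0,T;W_N)$.

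Uniqueness follows from linearity. The difference of two solutions solves the homogeneous system with zero initial data, and the first-order system then has only the trivial solution by Gronwall's inequality. Conversely, any solution of Definition \ref{def:decoupled_alfven_wave} yields such an ODE solution, since $\psi$ ranges over the basis $w_j$; so the two problems are equivalent.
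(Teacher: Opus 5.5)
Your proof is correct and follows essentially the same route as the paper. You test with the basis $w_j$ to get $A_N d_N'' + B_N d_N = -F_N - G_N'$, prove positive definiteness of the Gram matrix of the $\nablaperp w_k$ via Corollary \ref{CorollaryPoincare} and the $L^2$-orthonormality of the $w_k$, and then apply linear ODE (Carath\'eodory) theory to the first-order reformulation. Your remark that the forcing is only in $L^2(0,T;\R^N)$, rather than in $H^1$ as the paper asserts (which fails when $f\in L^2(0,T;H^1_{\b}(\Omega))$), is slightly more careful than the paper and still gives $c''\in L^2$, hence $\phi_N\in H^2(0,T;W_N)$.
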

\begin{proof}
    We write the finite dimensional problem as a linear differential system of dimension $N$. For this we introduce the vector $U_N \in \R^N$, the matrices $A_N, B_N \in \RR^{N\times N}$ and the maps $F_N, G_N \in H^1(0,T)^N$ whose components are defined by
    \[
    \begin{array}{c}
        (U_N)_i := \int_{\Omega} u_{0} \, \b\cdot \nabla w_{i}, \quad
        (G_N)_i := \int_{\Omega} g \, w_i, \quad
        (F_N)_i := \int_{\Omega} f \, \b\cdot \nabla w_{i}, \\[1.5ex]
        (A_N)_{ij} := \int_{\Omega} \nabla_{\perp} w_{i} \cdot \nabla_{\perp} w_{j}, \quad
        (B_N)_{ij} := \int_{\Omega} (\b\cdot \nabla w_{i})(\b\cdot \nabla w_{j}).
    \end{array}
    \]
    The equation becomes then to find $d_N \in H^2(0,T)^N$ such that for a.e. $t \in [0, T]$ we have
    $$A_N d''_N + B_N d_N = -F_N - G'_N,$$
    $\left(d_N\right)_j(0)=\int_{\Omega}\phi_0 w_j$ for all $j=1,\ldots,N$ and $-A_N d_N^{\prime}(0)=G_N(0)+U_N.$
    Note, that due to the Poincaré-type inequality, equation \eqref{eq:poincare_H1perp},
    $A_N$ s.p.d. uniformly in $N$. Indeed, taking any $c\in\RR^N$ and $v=\sum_{i=1}^N c_i w_i$, we have that
    $$
    c ^\top A_N c
    =\int_\Omega \nabla_{\perp} v\cdot \nabla_{\perp} v 
    = \lVert \nabla_{\perp} v \rVert_{L^2(\Omega)} ^2  
    \geq \frac{1}{\CP^2} \lVert v \rVert_{L^2(\Omega)}^2 = \frac{1}{\CP^2} \|c\|^2.
    $$
    We can therefore rewrite the above system as a first order linear system and apply Carathéodory's existence theorem, see e.g. Theorem 3.4 in \cite{oregan_1997}, to infer that there exists a unique solution to the decoupled finite dimensional problem.
\end{proof}
Before constructing a finite dimensional solution pair $(\phi_N,u_N)$ to the Alfvén wave equations, we show a uniform energy estimate for $\phi_N$.
\begin{lemma}[Uniform stability of the decoupled problem in finite dimensions]
    \label{lem:decoupled_stability}
    There exists a constant $C_0>0$, depending only on  $T>0$, $\Omega \subseteq \R^n$ and $\b:\R^n \to \R^n$ such that for a.e. $t \in [0,T]$ and all $N \in \N$ it holds that 
    $$
    \left\|\phi_N\right\|_{H_0^1(\Omega)} +\left\|\phi_N^{\prime}\right\|_{H_{\perp, 0}^1(\Omega)} 
    \leq 
    C_0 \left( \left\|u_0\right\|_{H^1_{b}(\Omega)} +\left\|\phi_0\right\|_{H_0^1(\Omega)} +\|f\|_{X} +\|g\|_{H^1(0,T;L^2(\Omega))} \right),
    $$
    where $X$ is the space of $H^1(0,T;L^2(\Omega))$ or $L^2(0,T;H^1_{\b}(\Omega))$ depending on the assumption on $f$.
\end{lemma}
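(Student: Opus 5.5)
Test \eqref{eq:decoupled_findim} with $\psi=\phi_N'(t)\in W_N$, which is admissible since $\phi_N\in H^2(0,T;W_N)$. This gives the energy identity
\begin{align*}
\frac12\frac{\mathrm{d}}{\mathrm{d}t}\Big(\|\nablaperp\phi_N'\|_{L^2(\Omega)}^2+\|\b\cdot\nabla\phi_N\|_{L^2(\Omega)}^2\Big) = -\int_\Omega f\,\b\cdot\nabla\phi_N' - \int_\Omega g'\,\phi_N'.
\end{align*}
Set $E(t):=\|\nablaperp\phi_N'\|^2+\|\b\cdot\nabla\phi_N\|^2$, integrate over $(0,t)$ and bound the right hand side. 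The $g'$ term is easy: Cauchy--Schwarz together with Corollary \ref{CorollaryPoincare} gives $|\int g'\phi_N'|\le \CP\|g'\|\,\|\nablaperp\phi_N'\|\le \frac{\CP^2}{2}\|g'\|^2+\frac12 E$.

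The $f$ term is where I expect the main obstacle, since $\b\cdot\nabla\phi_N'$ is not controlled by $E$. I split into the two cases of the hypothesis. If $f\in L^2(0,T;H^1_\b(\Omega))$, I integrate by parts in space: $-\int f\,\b\cdot\nabla\phi_N'=\int \phi_N'\,\div(\b f)$, where $\div(\b f)=\b\cdot\nabla f+f\div\b\in L^2$ because $\b$ is Lipschitz. The boundary term vanishes since $W_N\subset H^1_0$. Poincaré then bounds this by $C(\|f\|_{H^1_\b}^2+E)$. If instead $f\in H^1(0,T;L^2)$, I integrate by parts in time:
\begin{align*}
-\int_0^t\!\!\int_\Omega f\,\b\cdot\nabla\phi_N' = -\Big[\int_\Omega f\,\b\cdot\nabla\phi_N\Big]_0^t+\int_0^t\!\!\int_\Omega f'\,\b\cdot\nabla\phi_N.
\end{align*}
The endpoint term at $t$ is absorbed by Young, $\le \frac14\|\b\cdot\nabla\phi_N(t)\|^2+\|f(t)\|^2$, and $\|f(t)\|\le C\|f\|_{H^1(0,T;L^2)}$ by the Sobolev embedding in time. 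Gronwall's inequality then yields $\sup_t E(t)\le C(E(0)+\text{data}^2)$ in both cases.

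Next I bound $E(0)$. We have $\|\b\cdot\nabla\phi_N(0)\|\le\|\nabla\phi_N(0)\|\le\|\phi_0\|_{H^1_0}$, because the $L^2$ projection onto Dirichlet eigenfunctions is also $H^1_0$-orthogonal and hence stable. For $\phi_N'(0)$, test \eqref{eq:decoupled_findim_IC} with $\psi=\phi_N'(0)$. This gives $\|\nablaperp\phi_N'(0)\|^2=-\int u_0\,\b\cdot\nabla\phi_N'(0)-\int g(0)\phi_N'(0)$. Integrating by parts in the first term yields $\int\phi_N'(0)\div(\b u_0)$, which is bounded by $C\|u_0\|_{H^1_\b}\CP\|\nablaperp\phi_N'(0)\|$. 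This step is the reason $u_0$ must have $\b$-regularity. The second term is bounded by $\CP\|g(0)\|\le C\|g\|_{H^1(0,T;L^2)}$.

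Finally, $\|\nablaperp\phi_N'\|$ is already controlled by $E$. To get the full norm $\|\phi_N\|_{H^1_0}$, use the pointwise decomposition $|\nabla\phi_N|^2=|\nablaperp\phi_N|^2+|\b\cdot\nabla\phi_N|^2$. The second part is controlled by $E$. For the first, write $\nablaperp\phi_N(t)=\nablaperp\phi_N(0)+\int_0^t\nablaperp\phi_N'$, whose norm is at most $\|\phi_0\|_{H^1_0}+\sqrt{T}\sup_s E(s)^{1/2}$. Collecting all the bounds gives the claim with $C_0$ depending on $T,\Omega,\b$ (through $\CP$, $\alpha$ and $\|\div\b\|_\infty$).
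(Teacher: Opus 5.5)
Your proposal is correct and follows essentially the same route as the paper: you test \eqref{eq:decoupled_findim} with $\phi_N'$, integrate by parts in time when $f\in H^1(0,T;L^2(\Omega))$ and in space when $f\in L^2(0,T;H^1_{\b}(\Omega))$, bound the initial energy via the $H^1_0$-stability of the eigenfunction projection and by testing \eqref{eq:decoupled_findim_IC} with $\phi_N'(0)$, and recover $\nablaperp\phi_N$ from $\nablaperp\phi_N(0)+\int_0^t\nablaperp\phi_N'$. The only blemish is cosmetic: $\bigl\|\int_0^t\nablaperp\phi_N'\bigr\|_{L^2(\Omega)}$ is bounded by $T\sup_s E(s)^{1/2}$, not $\sqrt{T}\sup_s E(s)^{1/2}$, which only changes the constant.
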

\begin{proof}
    Take equation \eqref{eq:decoupled_findim}, and choose $\psi=\phi_N'$. One obtains for a.e. $t\in [0,T]$ 
    \begin{equation}
        \label{eq:decoupled_findim_norm}
        \frac{\text{d}}{\text{dt}} \left [\frac{1}{2} \lVert \nabla_{\perp}\phi_{N}' \rVert_{L^2(\Omega)} ^2 + \frac{1}{2} \lVert \b\cdot \nabla \phi_{N} \rVert_{L^2(\Omega)} ^2 \right ]
        = -\int_{\Omega}f\b\cdot \nabla \phi_{N}' - \int_{\Omega}g'\phi'_{N}.
    \end{equation}
    To bound $\phi_N$, we proceed differently depending on the assumption on $f$.
    
        \emph{First case} If $f \in H^1(0,T;L^2(\Omega))$, we have that 
        $$
        \int_{\Omega} f\b\cdot \nabla \phi'_{N} = \frac{\text{d}}{\text{d}t}  \int_{\Omega} f\b\cdot \nabla \phi_{N} - \int_{\Omega} f' \b\cdot \nabla \phi_{N}.
        $$
        applying this to equation \eqref{eq:decoupled_findim_norm} and using that $\left|\int_{\Omega} f\b\cdot \nabla \phi_{N}\right| \leq \frac{1}{4}\left\|\b\cdot \nabla \phi_{N}\right\|_{L^2(\Omega)}^2+\|f\|_{L^2(\Omega)}^2$, we get that
        \begin{align*}
        \frac{\text{d}}{\text{dt}} \Big [ \frac{1}{2} \lVert \nabla_{\perp}\phi_{N}' \rVert_{L^2(\Omega)} ^2 & + \frac{1}{2} \lVert \b\cdot \nabla \phi_{N} \rVert_{L^2(\Omega)} ^2 +\int_{\Omega} f\b\cdot \nabla \phi_{N} \Big ] \\
        &\leq \frac{1}{2} \lVert \nabla_{\perp}\phi'_{N}  \rVert_{L^2(\Omega)}^2 + \frac{1}{2}\lVert \b\cdot \nabla \phi_{N} \rVert_{L^2(\Omega)}^2  + \int_{\Omega} f\b\cdot \nabla \phi_{N} \\
        &\quad + \lVert f \rVert_{L^2(\Omega)}^2 + \lVert f' \rVert_{L^2(\Omega)}^2 + \frac{\CP^2}{2} \lVert g' \rVert_{L^2(\Omega)}^2,
        \end{align*}
        where $\CP>0$ is the Poincaré constant from equation \eqref{eq:poincare_H1perp}.
        Applying Grönwall's lemma to the above yields
        \begin{align}
        \nonumber
        \frac{1}{2}&\left\|\nabla_{\perp} \phi_N^{\prime}\right\|_{L^2(\Omega)}^2 +\frac{1}{2}\left\|\b\cdot\nabla\phi_N \right\|_{L^2(\Omega)}^2+\int_{\Omega} f\b\cdot\nabla\phi_N  \\
        \leq
        \nonumber
        & \exp (t)\Bigg[ \frac{1}{2}\left\|\nabla_{\perp} \phi_N^{\prime}(0)\right\|_{L^2(\Omega)}^2+\frac{1}{2}\left\|\b\cdot\nabla\phi_N(0) \right\|_{L^2(\Omega)}^2+\int_{\Omega} f(0)\b\cdot\nabla\phi_N(0)  \\
        \label{eq:decoupled_findim_positiveterm}
        & \qquad \qquad +\int_0^t\left(\|f\|_{L^2(\Omega)}^2+ \left\| f^{\prime}\right\|_{L^2(\Omega)}^2+ \frac{\CP^2}{2} \left\|g^{\prime}\right\|_{L^2(\Omega)}^2\right)\Bigg]
        \end{align}
        We still need to bound the initial values. We begin with the term $\nabla_\perp\phi_N'(0)$. By equation \eqref{eq:decoupled_findim_IC} with $\psi:=\phi_{N}'(0)$. Elementary estimates yield
        \begin{align*}
            \lVert \phi_N'(0) \rVert_{H^1_{\perp,0}(\Omega)} ^2 \leq
            3 \CP^2 \left [ C_0^2 \lVert g \rVert_{H^1(0,T;L^2(\Omega))}^2 + \lVert \operatorname{div}\b \rVert_{L^\infty(\Omega)}^2 \lVert u_{0} \rVert_{L^2(\Omega)}^2 + \lVert \nabla u_{0}  \rVert_{L^2(\Omega)}^2 \right ],
        \end{align*}
         where we used the continuous embedding of $H^1(0,T;L^2(\Omega)) \hookrightarrow C([0,T];L^2(\Omega))$. Further, we have that
        \begin{align*}
            \normL{\b \cdot \nabla \phi_N(0)}^2 \leq \normHzero{\phi_N(0)}^2 = \sum_{k=1}^N \frac{|\innerHzero{\phi_0}{w_k}|^2}{\normHzero{w_k}^2} \leq \normHzero{\phi_0}^2
        \end{align*}
        and also
        \begin{align*}
            \left | \int_\Omega f(0) \b \cdot \nabla \phi_N(0) \right | \leq \frac{C_0^2}{2} \norm{f}_{H^1(0,T;L^2(\Omega))}^2 + \frac{1}{2} \normHzero{\phi_0}^2.
        \end{align*}
        Finally, observing that $\left | \int_\Omega f \b \cdot \nabla \phi_N \right | \leq \normL{f}^2 + \frac{1}{4} \normL{\b \cdot \nabla \phi_N}^2$ and putting everything together we get
        \begin{align*}
            \frac{1}{2} &\normL{\nablaperp \phi_N'}^2 + \frac{1}{4} \normL{\b \cdot \nabla \phi_N}^2 - \normL{f}^2 \\
            &\leq \frac{1}{2} \normL{\nablaperp \phi_N'}^2 + \frac{1}{2} \normL{\b \cdot \nabla \phi_N}^2 + \int_\Omega f \b \cdot \nabla \phi_N \\
            &\leq \exp(t) \Big [\frac{3}{2} \CP^2 \left [ C_0 \lVert g \rVert_{H^1(0,T;L^2(\Omega))}^2 + \lVert \operatorname{div}\b \rVert_{L^\infty(\Omega)}^2 \lVert u_{0} \rVert_{L^2(\Omega)}^2 + \lVert u_{0}  \rVert_{\Hzero}^2 \right ] \\
            &\quad \quad \quad \quad + \normHzero{\phi_0}^2 + \frac{C_0^2}{2} \normHL{f}^2 \\
            &\quad \quad \quad \quad + \int_0^t \left ( \normL{f}^2 + \normL{f'}^2 + \frac{\CP^2}{2} \normL{g'}^2 \right ) \Big ].
        \end{align*}
        By redefining the constant $C_0>0$ and using that $\nablaperp \phi_N(t) = \nablaperp \phi_N(0) + \int_0^t \nablaperp \phi_N'$ we get the desired bound in case of $X=H^1(0,T;L^2(\Omega))$.
        
        \emph{Second case} If $f \in L^2(0,T;H^1_{\b}(\Omega))$.
        First, observe that if $\b \in W^{1,\infty}(\Omega)^n, \varphi \in H^1_0(\Omega)$ and if $\psi \in H^1_{\b}(\Omega)$, where $\Omega \subseteq \R^n$ is as in Definition \ref{def:alfven-wave_weak}, we have the following integration by parts
        \begin{align*}
            \int_\Omega \psi \b \cdot \nabla \varphi = - \int_\Omega \left ( \div (\b)\psi + \b \cdot \nabla \psi \right ) \varphi.
        \end{align*}
        This can be proved by using a Meyers-Serrin type result for vector-field-dependent Sobolev spaces, established for example in \cite{franchi_meyers_serrin_1996}. Utilizing this result we get that
        \begin{align*}
        \left | \int_{\Omega} f\b\cdot \nabla \phi_{N}' \right | \leq \left ( \lVert \div \b \rVert_{L^\infty(\Omega)} \|f\|_{L^2(\Omega)} + \|\b \cdot \nabla f\|_{L^2(\Omega)}\right ) \|\phi_N'\|_{L^2(\Omega)}.
        \end{align*}
        We get the inequality
        \begin{align*}
            \frac{\text{d}}{\text{dt}} &\left [\frac{1}{2} \lVert \nabla_{\perp}\phi_{N}' \rVert_{L^2(\Omega)} ^2 + \frac{1}{2} \lVert \b\cdot \nabla \phi_{N} \rVert_{L^2(\Omega)} ^2 \right ] \\
            &\leq \frac{3}{2} \CP^2 \left ( \left ( \lVert \div \b \rVert_{L^\infty(\Omega)}^2 + 1 \right ) \|f\|_{H^1_{\b}(\Omega)}^2 + \|g'\|_{L^2(\Omega)}^2 \right ) + \frac{1}{2} \lVert \nabla_{\perp}\phi_{N}' \rVert_{L^2(\Omega)} ^2,
        \end{align*}
        where $\|f\|_{H^1_{\b}(\Omega)}^2 = \|f\|_{L^2(\Omega)}^2 + \|\b \cdot \nabla f\|_{L^2(\Omega)}^2$. Then applying Grönwall's lemma and using the bounds on the initial data from the first case we get the desired inequality for $X=L^2(0,T;H^1_{\b}(\Omega))$.
\end{proof}
Next we construct a solution $(u_N,\phi_N)$ to the finite dimensional version of Definition \ref{def:alfven-wave_weak}. For that we begin by observing that by the first equation in Definition \ref{def:alfven-wave_weak} $u$ is uniquely determined by $\phi$ and $u_0$. We define $u_N\in H^1\left(0, T ; L^2(\Omega)\right)$ as
\begin{equation}
    \label{eq:uN_solution_candidate}
u_N(t)=u_0+\int_0^t\left(f+\b\cdot\nabla \phi_N\right) \mathrm{d} s.
\end{equation}
It remains to check that our finite dimensional solution candidate  $(\phi_N,u_N) \in W_N \times H^1(0,T;L^2(\Omega))$ is indeed a solution.
\begin{lemma}[Existence of Alfvén waves in finite dimensions]
    Let $(\phi_N,u_N) \in W_N \times H^1(0,T;L^2(\Omega))$ be given by the solutions to the decoupled problem from Definition \ref{def:decoupled_alfven_wave} and from equation \eqref{eq:uN_solution_candidate}, respectively. Then for a.e. $t \in [0,T]$ and all $v \in L^2(\Omega)$, $\psi \in W_N$ the following equations hold
    \begin{align}
        \label{eq:alfven-findim-weak1}
        \int_\Omega u_N' v - \int_\Omega v\b \cdot\nabla \phi_N   &= \int_\Omega f v, \\
        \label{eq:alfven-findim-weak2}
        -\int_\Omega \nabla_\perp \phi_N'\cdot \nabla_\perp \psi  - \int_\Omega u_N \b \cdot\nabla \psi &= \int_\Omega g \psi,
    \end{align}
    and $u_N(0) = u_0$, $\phi_N(0) = \sum_{k=1}^N w_k \int_\Omega \phi_0 \, w_k$.
\end{lemma}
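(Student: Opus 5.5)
The first equation \eqref{eq:alfven-findim-weak1} and both initial conditions follow essentially from the construction, so the main work is \eqref{eq:alfven-findim-weak2}. Since $\phi_N \in H^2(0,T;W_N)$, the map $s\mapsto \b\cdot\nabla\phi_N(s)$ lies in $H^2(0,T;L^2(\Omega))$. Together with $f\in L^2(0,T;L^2(\Omega))$, formula \eqref{eq:uN_solution_candidate} therefore defines $u_N \in H^1(0,T;L^2(\Omega))$. Moreover $u_N' = f + \b\cdot\nabla\phi_N$ holds a.e.\ in $(0,T)$ as an identity in $L^2(\Omega)$. Testing with any $v\in L^2(\Omega)$ gives \eqref{eq:alfven-findim-weak1}. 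The identity $u_N(0)=u_0$ is immediate from \eqref{eq:uN_solution_candidate}, and $\phi_N(0)$ is prescribed in Definition \ref{def:decoupled_alfven_wave}.

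For \eqref{eq:alfven-findim-weak2}, fix $\psi\in W_N$ and introduce the scalar residual
\[
R_\psi(t) := -\int_\Omega \nabla_\perp\phi_N'(t)\cdot\nabla_\perp\psi - \int_\Omega u_N(t)\,\b\cdot\nabla\psi - \int_\Omega g(t)\,\psi .
\]
The goal is to show $R_\psi\equiv 0$ on $[0,T]$. Each term is absolutely continuous in $t$:
\begin{itemize}
\item $\phi_N'\in H^1(0,T;W_N)$;
\item $u_N\in H^1(0,T;L^2(\Omega))$;
\item $g\in H^1(0,T;L^2(\Omega))$.
\end{itemize}
Hence $R_\psi\in H^1(0,T)$ and may be differentiated a.e. Using $u_N' = f+\b\cdot\nabla\phi_N$, we get for a.e.\ $t$
\[
R_\psi'(t) = -\int_\Omega \nabla_\perp\phi_N''\cdot\nabla_\perp\psi - \int_\Omega (\b\cdot\nabla\phi_N)(\b\cdot\nabla\psi) - \int_\Omega f\,\b\cdot\nabla\psi - \int_\Omega g'\psi .
\]
This vanishes by the decoupled equation \eqref{eq:decoupled_findim}, since $\psi\in W_N$ is an admissible test function there.

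It remains to check the initial value. Since $u_N(0)=u_0$,
\[
R_\psi(0) = -\int_\Omega\nabla_\perp\phi_N'(0)\cdot\nabla_\perp\psi - \int_\Omega u_0\,\b\cdot\nabla\psi - \int_\Omega g(0)\psi ,
\]
and this is zero precisely by the initial condition \eqref{eq:decoupled_findim_IC}. An absolutely continuous function with zero derivative a.e.\ and zero initial value vanishes identically, so $R_\psi(t)=0$ for all $t\in[0,T]$. This is \eqref{eq:alfven-findim-weak2} for every $\psi\in W_N$.

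The only delicate point is justifying the pointwise evaluations at $t=0$ and the differentiation under the integral. For this I would use the embeddings $H^1(0,T;Y)\hookrightarrow C([0,T];Y)$ for $Y=L^2(\Omega)$ and $Y=W_N$. I would also use that $\psi\mapsto\b\cdot\nabla\psi$ and $\nabla_\perp$ are bounded on the finite-dimensional space $W_N\subset H^1_0(\Omega)$, because $\b$ is Lipschitz and hence bounded. Finally, $g(0)$ is well defined because $g\in H^1(0,T;L^2(\Omega))$, which is exactly the regularity assumed in Theorem \ref{thm:well-posedness}.
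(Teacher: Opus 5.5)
Your proof is correct and follows essentially the same route as the paper. Both arguments substitute $u_N' = f + \b\cdot\nabla\phi_N$ into the decoupled equation \eqref{eq:decoupled_findim} to show that the time derivative of the second equation vanishes, and then use \eqref{eq:decoupled_findim_IC} to fix the value at $t=0$; the paper integrates the differentiated identity from $0$ to $t$, while you show the residual $R_\psi$ has zero derivative and zero initial value, which is the same fundamental-theorem-of-calculus argument with the regularity stated more explicitly.
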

\begin{proof}
    By construction of $u_N$ equation \eqref{eq:alfven-findim-weak1} holds and also that $u_N(0)=u_0$. The initial condition for $\phi_N(0)$ is respected as well. Therefore, we only need to show that for a.e. $t \in [0,T]$ and all $\psi \in W_N$ we have that equation \eqref{eq:alfven-findim-weak2} holds. By construction of $u_N$, we have that for a.e. $t\in [0,T]$ and all $\psi\in W_N$ it holds that
    \begin{align*}
        \int_{\Omega} u_N^{\prime} \b\cdot \nabla \psi -\int_{\Omega}  (\b \cdot \nabla \phi_N ) (\b \cdot\nabla \psi) =\int_{\Omega} f  \b\cdot\nabla \psi
    \end{align*}
    If we add this to equation \eqref{eq:decoupled_findim}, we get that for a.e. $t \in [0,T]$ and for all $\psi \in W_N$ it holds that
    \begin{equation}
        \label{eq:alfven_exist_findim1}
        \int_{\Omega} \nabla_{\perp} \phi_N^{\prime \prime}\cdot \nabla_{\perp} \psi  + \int_{\Omega} u_N^{\prime}  \b\cdot\nabla \psi  = - \int_{\Omega} g^{\prime} \psi.
    \end{equation}
    Observe that for all $t\in [0,T]$ and all $\psi \in W_N$ it holds that
    \begin{align*}
        \int_{\Omega} u_N(t) \, \b \cdot \nabla \psi 
        &= \int_{\Omega} u_N(0) \, \b \cdot \nabla \psi 
        + \int_0^t \int_{\Omega} u_N'(s) \, \b \cdot \nabla \psi \, \mathrm{d}x \, \mathrm{d}s, \\
        \int_{\Omega} g(t) \, \psi 
        &= \int_{\Omega} g(0) \, \psi 
        + \int_0^t \int_{\Omega} g'(s) \, \psi \, \mathrm{d}x \, \mathrm{d}s, \\
        \int_{\Omega} \nabla_{\perp} \phi_N'(t) \cdot \nabla_{\perp} \psi  
        &= \int_{\Omega} \nabla_{\perp} \phi_N'(0) \cdot \nabla_{\perp} \psi  
        + \int_0^t \int_{\Omega} \nabla_{\perp} \phi_N''(s) \cdot \nabla_{\perp} \psi \, \mathrm{d}x \, \mathrm{d}s.
    \end{align*}
    For a fixed $t\in [0,T]$ and $\psi \in W_N$ we obtain
    \begin{align*}
    \int_{\Omega} \nabla_{\perp} \phi_N^{\prime}(t)\cdot \nabla_{\perp} \psi   + \int_{\Omega} u_N(t)  \b\cdot\nabla \psi   
    =  & \int_{\Omega} \nabla_{\perp} \phi_N^{\prime}(0)\cdot \nabla_{\perp} \psi  +\int_{\Omega} u_N(0)  \b\cdot\nabla \psi   \\
    & -\left[\int_{\Omega} g \psi-\int_{\Omega} g(0) \psi\right] .
    \end{align*}
    As $u_N(0)=u_0$ we get, by equation \eqref{eq:decoupled_findim_IC}, the second equation of the Alfvén waves.
\end{proof}
Given that $\phi_N$ solves the decoupled equation, it is no surprise that the previous lemma holds, namely that $\phi_N,u_N$ together solve the full system. This is because the second initial condition is chosen precisely so that the second equation holds at $t=0$.
\begin{lemma}[Uniform stability of Alfvén wave equations in finite dimension]
    \label{lem:stability}
    There exists a constant $C_0>0$, depending only on  $T>0$, $\Omega \subseteq \R^n$ and $\b:\R^n \to \R^n$ such that for a.e. $t \in [0,T]$ and all $N \in \N$ it holds that 
    \begin{align*}
        \left\|u_N\right\|_{L^2(\Omega)} + &\left\|u_N'\right\|_{L^2(\Omega)} + \left\|\phi_N\right\|_{H_0^1(\Omega)}  +\left\|\phi_N^{\prime}\right\|_{H_{\perp, 0}^1(\Omega)} \\
        & \leq C_0 \left(\left
        \|u_0\right\|_{H^1(\Omega)}
        +\left\|\phi_0\right\|_{H_0^1(\Omega)}
        +\|f\|_{H^1(0,T;L^2(\Omega))}
        +\|g\|_{H^1\left(0, T ; L^2(\Omega)\right)}\right),
    \end{align*}
    if $f \in H^1(0,T;L^2(\Omega))$
    and
    \begin{align*}
        \left\|u_N\right\|_{L^2(\Omega)} + &\left\|u_N'\right\|_{L^2(0,T;L^2(\Omega))} + \left\|\phi_N\right\|_{H_0^1(\Omega)}  +\left\|\phi_N^{\prime}\right\|_{H_{\perp, 0}^1(\Omega)} \\
        & \leq C_0 \left(\left
        \|u_0\right\|_{H^1(\Omega)}
        +\left\|\phi_0\right\|_{H_0^1(\Omega)}
        +\|f\|_{L^2(0,T;H^1_{\b}(\Omega))}
        +\|g\|_{H^1\left(0, T ; L^2(\Omega)\right)}\right),
    \end{align*}
    if $f \in L^2(0,T;H^1_{\b}(\Omega))$.
\end{lemma}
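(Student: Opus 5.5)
The $\phi_N$ terms need no new work: Lemma \ref{lem:decoupled_stability} already bounds $\|\phi_N\|_{H^1_0(\Omega)}+\|\phi_N'\|_{H^1_{\perp,0}(\Omega)}$ uniformly in $N$ and $t$. Since $\|u_0\|_{H^1_{\b}(\Omega)}\le C\|u_0\|_{H^1(\Omega)}$ for $\b\in\Lip(\Omega)^n$, the right-hand side of that lemma is dominated by the right-hand side of the present statement. It remains to bound $u_N$ and $u_N'$, and for both we use the explicit representation \eqref{eq:uN_solution_candidate}.

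\textbf{Bound on $u_N'$.} Differentiating \eqref{eq:uN_solution_candidate} gives, for a.e.\ $t$,
\begin{align*}
u_N'(t)=f(t)+\b\cdot\nabla\phi_N(t).
\end{align*}
Since $|\b|=1$, we have $\|\b\cdot\nabla\phi_N(t)\|_{L^2(\Omega)}\le\|\phi_N(t)\|_{H^1_0(\Omega)}$, which the previous step bounds uniformly in $t$ and $N$.

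In the first case, $f\in H^1(0,T;L^2(\Omega))$. The embedding $H^1(0,T;L^2(\Omega))\hookrightarrow C([0,T];L^2(\Omega))$, already used in the proof of Lemma \ref{lem:decoupled_stability}, gives $\sup_t\|f(t)\|_{L^2(\Omega)}\le C\|f\|_{H^1(0,T;L^2(\Omega))}$. This yields the pointwise-in-time bound on $\|u_N'(t)\|_{L^2(\Omega)}$.

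In the second case, $f$ is only in $L^2(0,T;H^1_{\b}(\Omega))$, so no pointwise bound on $f(t)$ is available. This is why the statement then only claims the $L^2(0,T;L^2(\Omega))$ norm of $u_N'$. We integrate $\|u_N'\|^2_{L^2(\Omega)}\le 2\|f\|^2_{L^2(\Omega)}+2\|\phi_N\|^2_{H^1_0(\Omega)}$ over $(0,T)$. This gives
\begin{align*}
\|u_N'\|_{L^2(0,T;L^2(\Omega))}\le C\bigl(\|f\|_{L^2(0,T;H^1_{\b}(\Omega))}+\sqrt{T}\,\sup_t\|\phi_N\|_{H^1_0(\Omega)}\bigr).
\end{align*}

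\textbf{Bound on $u_N$.} Again from \eqref{eq:uN_solution_candidate}, by Minkowski and Cauchy--Schwarz in time,
\begin{align*}
\|u_N(t)\|_{L^2(\Omega)}\le\|u_0\|_{L^2(\Omega)}+\sqrt{T}\,\|f\|_{L^2(0,T;L^2(\Omega))}+T\sup_{s}\|\phi_N(s)\|_{H^1_0(\Omega)}.
\end{align*}
In both cases $\|f\|_{L^2(0,T;L^2(\Omega))}\le\|f\|_X$. The $\phi_N$ term is controlled by Lemma \ref{lem:decoupled_stability}.

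\textbf{Conclusion and main subtlety.} Summing the three bounds and enlarging $C_0$, which depends only on $T$, $\Omega$ and $\b$ through $\CP$ and $\|\div\b\|_{L^\infty(\Omega)}$, gives both displayed estimates. There is no real obstacle. The one point needing care is the distinction between the two cases: the pointwise-in-time bound on $u_N'$ relies on the time regularity of $f$, and when it is unavailable one must settle for the time-integrated norm.
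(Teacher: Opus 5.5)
Your proposal is correct and takes essentially the same route as the paper. You take the $\phi_N$ bounds from Lemma \ref{lem:decoupled_stability} and read off $u_N'=f+\b\cdot\nabla\phi_N$ and $u_N$ from \eqref{eq:uN_solution_candidate}. In the first case you use $H^1(0,T;L^2(\Omega))\hookrightarrow C([0,T];L^2(\Omega))$ for the pointwise bound, and in the second you settle for the time-integrated norm of $u_N'$. The only difference is that you make explicit the Minkowski/Cauchy--Schwarz steps and the comparison $\|u_0\|_{H^1_{\b}(\Omega)}\le\|u_0\|_{H^1(\Omega)}$, which the paper leaves implicit.
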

\begin{proof}    
    First, suppose that $f \in H^1(0,T;L^2(\Omega))$. We know from Lemma \ref{lem:decoupled_stability} that $\left\|\phi_N\right\|_{H_0^1(\Omega)} +\left\|\phi_N^{\prime}\right\|_{H_{\perp, 0}^1(\Omega)}$ is bounded uniformly for a.e. $t \in [0,T]$ and $N \in \N$. By definition of $u_N$, see equation \eqref{eq:uN_solution_candidate}, we know that
    \begin{align*}
        \lVert u'_{N} \rVert_{L^2(\Omega)} = \lVert f+b\cdot \nabla \phi_{N} \rVert_{L^2(\Omega)} \leq \lVert f \rVert_{L^2(\Omega)} + \lVert \nabla \phi_{N} \rVert_{L^2(\Omega)}.
    \end{align*}
    Consequentially
    \begin{align*}
    \left\|u_N\right\|_{L^2(\Omega)}^2 &+ \left\|u_N^{\prime}\right\|_{L^2(\Omega)}^2 \\
    &\leq C_0 \left(\left\|u_0\right\|_{L^2(\Omega)}^2+\|f\|_{H^1\left(0, T; L^2(\Omega)\right)}^2+\left\|\phi_N\right\|_{H_0^1(\Omega)}^2+\int_0^T\left\|\phi_N\right\|_{H_0^1(\Omega)}^2\right)
    \end{align*}
    for a.e. $t \in [0,T]$ and some constant $C_0>0$ depending only on $\Omega \subseteq \R^n$ and $T>0$. Using the bound on $\left\|\phi_N\right\|_{H_0^1(\Omega)}$ we already have and adapting the constant, we get the desired bound.
    \\
    Now, suppose that $f \in L^2(0,T;H^1_{\b}(\Omega))$. In that case one straightforwardly obtains
    \begin{align*}
        \left\|u_N\right\|_{L^2(\Omega)}^2 &+\left\|u_N^{\prime}\right\|_{L^2(0,T;L^2(\Omega))}^2 \\
        & \leq C_0 \left(\left\|u_0\right\|_{L^2(\Omega)}^2+\|f\|_{L^2(0, T ; H^1_{\b} (\Omega))}^2+\int_0^T\left\|\phi_N\right\|_{H_0^1(\Omega)}^2\right)
    \end{align*}
    for a.e. $t \in [0,T]$ and some constant $C_0>0$ depending only on $\Omega \subseteq \R^n$ and $T>0$. Note the slightly different stability on $u'_N$ in time. The rest of the prove is identical to the first part.    
\end{proof}
The above stability bound allows us to bound uniformly the solution $(u_N,\phi_N)$ in the space 
\begin{align*}
    H^1(0, T; L^2(\Omega)) \times \left ( L^2(0, T; H^1_0(\Omega))\cap H^1(0, T; H_{\perp,0}(\Omega)) \right ).
\end{align*}
By weak sequential precompactness of bounded sets in Hilbert spaces, see e.g. Theorem 3.18 in \cite{brezis_2010} we can pass to the limit on a subsequence, i.e. we have $(u,\phi)$ such that
\begin{align}
\label{eq:weak_convergence_u}
u_{N_\ell} \rightharpoonup u&\quad\text{in }H^1(0, T; L^2(\Omega)),\\
\label{eq:weak_convergence_phi}
\phi_{N_\ell} \rightharpoonup \phi&\quad\text{in }L^2(0, T; H^1_0(\Omega))\cap H^1(0, T; H_{\perp,0}(\Omega)).
\end{align}
\begin{lemma}[Existence of weak Alfvén waves]
    \label{lem:existence}
    The weak limits $u,\phi$ from \eqref{eq:weak_convergence_u} and \eqref{eq:weak_convergence_phi} solve the weak form of the Alfvén wave equations from Definition \ref{def:alfven-wave_weak}.
\end{lemma}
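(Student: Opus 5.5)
The argument is the standard Galerkin limit passage. We test the finite-dimensional equations against a smooth time cutoff, pass to the weak limit term by term, extend to all test functions by density, and finally identify the initial data. Everything is linear, so weak convergence suffices throughout and no compactness argument is needed.

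\emph{Step 1: time-integrated equations.} Fix $M\in\N$ and $\psi\in W_M$. Let $\eta\in C^\infty_c(0,T)$ and $\zeta\in L^2(0,T;L^2(\Omega))$. For every index $N_\ell\ge M$ we have $\psi\in W_{N_\ell}$, so \eqref{eq:alfven-findim-weak1}--\eqref{eq:alfven-findim-weak2} hold for $(u_{N_\ell},\phi_{N_\ell})$. We take $v=\zeta(t)$ in \eqref{eq:alfven-findim-weak1}, multiply \eqref{eq:alfven-findim-weak2} by $\eta(t)$, and integrate over $(0,T)$. This gives
\begin{align*}
\int_0^T\!\!\int_\Omega u_{N_\ell}'\zeta-\zeta\,\b\cdot\nabla\phi_{N_\ell} &= \int_0^T\!\!\int_\Omega f\zeta,\\
-\int_0^T\!\!\int_\Omega \eta\,\nablaperp\phi_{N_\ell}'\cdot\nablaperp\psi+\eta\, u_{N_\ell}\,\b\cdot\nabla\psi &= \int_0^T\!\!\int_\Omega g\,\eta\,\psi .
\end{align*}

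\emph{Step 2: limit $\ell\to\infty$.} Each term on the left is a bounded linear functional on the space in which the weak convergence \eqref{eq:weak_convergence_u}--\eqref{eq:weak_convergence_phi} holds.
\begin{itemize}
\item $u\mapsto\int\!\!\int u'\zeta$ and $u\mapsto\int\!\!\int\eta\,u\,\b\cdot\nabla\psi$ are continuous on $H^1(0,T;L^2(\Omega))$, because $\b\cdot\nabla\psi\in L^\infty(\Omega)$ ($\b$ is bounded and $\psi$ is a finite combination of Laplace eigenfunctions, which are smooth in the interior; alternatively one can use $\psi\in H^1_0(\Omega)$ together with Cauchy--Schwarz).
\item $\phi\mapsto\int\!\!\int\zeta\,\b\cdot\nabla\phi$ is continuous on $L^2(0,T;H^1_0(\Omega))$.
\item $\phi\mapsto\int\!\!\int\eta\,\nablaperp\phi'\cdot\nablaperp\psi$ is continuous on $H^1(0,T;H^1_{\perp,0}(\Omega))$.
\end{itemize}
Hence the same identities hold for $(u,\phi)$. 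Since $\zeta$ is arbitrary, \eqref{eq:alfven-weak1} holds for a.e.\ $t$ and all $v\in L^2(\Omega)$. Since $\eta$ is arbitrary, \eqref{eq:alfven-weak2} holds for a.e.\ $t$ and all $\psi\in\bigcup_M W_M$. The exceptional null set can be taken independent of $\psi$ by using a countable set of $\psi$'s (the basis functions $w_k$) and linearity.

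\emph{Step 3: density in $\psi$.} I expect this step to need the most care. The union $\bigcup_M W_M$ is dense in $H^1_0(\Omega)$, since the $w_k$ form an orthogonal basis. For a.e.\ $t$, the left-hand side of \eqref{eq:alfven-weak2} is continuous in $\psi$ with respect to the $H^1_0$ norm: we have $\|\nablaperp\psi\|\le\|\nabla\psi\|$, $\nablaperp\phi'(t)\in L^2$, $\|\b\cdot\nabla\psi\|\le\|\nabla\psi\|$ and $u(t)\in L^2$. The right-hand side is continuous because $g(t)\in L^2\subset H^{-1}$. So \eqref{eq:alfven-weak2} extends to all $\psi\in H^1_0(\Omega)$.

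\emph{Step 4: initial conditions.} The trace map $w\mapsto w(0)$ is linear and continuous from $H^1(0,T;L^2(\Omega))$ to $L^2(\Omega)$, so it is weakly continuous. Thus $u_{N_\ell}(0)=u_0$ passes to $u(0)=u_0$. Similarly $\phi_{N_\ell}\rightharpoonup\phi$ in $H^1(0,T;H^1_{\perp,0}(\Omega))\hookrightarrow C([0,T];H^1_{\perp,0}(\Omega))$ gives $\phi_{N_\ell}(0)\rightharpoonup\phi(0)$ in $H^1_{\perp,0}(\Omega)$. On the other hand $\phi_{N_\ell}(0)=\sum_{k\le N_\ell}w_k\int_\Omega\phi_0w_k\to\phi_0$ strongly in $H^1_0(\Omega)$, hence in $H^1_{\perp,0}(\Omega)$. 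Uniqueness of weak limits yields $\phi(0)=\phi_0$, and the regularity $\phi_0\in H^1_0(\Omega)$ is consistent with $\phi\in L^2(0,T;H^1_0(\Omega))$.
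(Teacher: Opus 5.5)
Your proposal is correct and takes the same route as the paper: time-integrated Galerkin identities, passage to the weak limit through bounded linear functionals, density of $\bigcup_M W_M$ in $H^1_0(\Omega)$, and identification of the initial data through weak continuity of $w\mapsto w(0)$. The only differences are minor: you do the density step pointwise in $t$ rather than in $L^2(0,T;H^1_0(\Omega))$, which makes the $\psi$-independent null set explicit, and you identify $\phi(0)$ in $H^1_{\perp,0}(\Omega)$ rather than in $L^2(\Omega)$. One correction: your parenthetical claim that $\b\cdot\nabla\psi\in L^\infty(\Omega)$ for $\psi\in W_M$ does not hold on a general Lipschitz domain, since eigenfunction gradients can blow up at reentrant corners; the Cauchy--Schwarz alternative with $\psi\in H^1_0(\Omega)$, which you also give, is enough.
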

\begin{proof}
    We show that both equations and the initial conditions hold.

\emph{Equation \eqref{eq:alfven-weak1}}. Fix $v \in L^2\left(0, T ; L^2(\Omega)\right)$ and $N \in \mathbb{N}$. 
We get from the first equation that the solution $\phi_N$ of equations \eqref{eq:decoupled_findim} and \eqref{eq:decoupled_findim_IC} and the function $u_N$ defined in \eqref{eq:uN_solution_candidate} satisfy
\begin{align*}
    \int_0^T\int_{\Omega} u_N^{\prime} v-\int_0^T \int_{\Omega}v\b\cdot \nabla \phi_{N} =\int_0^T\int_{\Omega} f v.
\end{align*}
As this holds for any $N \in \mathbb{N}$ and in particular on the subsequences $N_{\ell}$ and as the expression on the LHS defines a linear, bounded functionals in $H^1\left(0, T ; L^2(\Omega)\right)$ and $L^2\left(0, T ; H_0^1(\Omega)\right)$, respectively, we can pass to the limit in the above equation. Then, choosing as test function $\eta v \in L^2(0,T;L^2(\Omega))$, where $\eta \in \mathcal{D}(0,T)$ and $v \in L^2(\Omega)$, we get by the fundamental lemma of calculus of variations that for all $v \in L^2(\Omega)$ and a.e. $t \in[0, T]$ equation \eqref{eq:alfven-weak1} holds.

\emph{Equation \eqref{eq:alfven-weak2}}. Recall that $(u_N,\phi_N)$ satisfies that for a.e. $t \in [0,T]$ and all $\psi_k \in W_N$ it holds that
$$-\int_\Omega \nabla_\perp \phi_N'\cdot \nabla_\perp \psi_k  - \int_\Omega u_N \b \cdot\nabla \psi_k = \int_\Omega g \psi_k.$$
Now, fix a positive integer $M\leq N$, multiply the equation by \( d_k(t)\in \mathcal{D}(0,T) \), sum over $k=1,\dots,M$, and integrate over $(0,T)$ to get
$$-\int_0^T \int_\Omega \nabla_\perp \phi_N'\cdot \nabla_\perp \psi_M  
- \int_0^T \int_\Omega u_N \b \cdot\nabla \psi_M =
\int_0^T  \int_\Omega g \psi_M,$$
where $\psi_M := \sum_{k=1}^M d_k(t) \psi_k$. Maps of this form are dense in $L^2(0,T;H^1_0(\Omega))$.
Next, we pass to the limit. For that we fix $M \in \N$, and $\psi_M \in \mathcal{D}(0,T;H^1_0(\Omega))$ as above and observe that the above equation holds for any $N=N_\ell$ such that $N_\ell \geq M$. For each fixed $\psi_M$, each term in the above equation is a linear bounded functional. By weak convergence, we can pass to the limit in the above equation. Then, using the density of the $\psi_M$ in $L^2(0, T; H^1_0(\Omega))$, we can replace \( \psi_M \) with any \( \psi \in L^2(0, T; H^1_0(\Omega)) \) in the above equation. Similar as in step 1 we get by the fundamental lemma of calculus of variations that for all $\psi\in H^1_0(\Omega)$ and a.e. $t\in (0,T)$, equation \eqref{eq:alfven-weak2} holds

\emph{Initial conditions}. First note the continuous embeddings of\\
$H^1(0, T; L^2(\Omega)) \hookrightarrow C([0, T]; L^2(\Omega))$ and 
$H^1(0, T; H_{\perp,0}^1(\Omega)) \hookrightarrow H^1(0, T; L^2(\Omega))$. From that we infer that $\delta_0:H^1(0, T; L^2(\Omega)) \to L^2(\Omega)$ given by the evaluation in time zero, is a continuous operator. We conclude that \( u_{N_\ell}(0) \rightharpoonup u(0) \) and \( \phi_{N_\ell}(0) \rightharpoonup \phi(0) \) in \( L^2(\Omega) \).
By the initial conditions in finite dimensions we have $u_{N_\ell}(0) = u_0$ and $\phi_{N_\ell}(0) = \sum_{j=1}^{N_\ell} w_j \int_\Omega \phi_0 w_j$. Thus we have $\phi_{N_\ell}(0) \to \phi_0$ in \( L^2(\Omega) \). By uniqueness of the weak limit we obtain \( u(0) = u_0 \) and \( \phi(0) = \phi_0 \).    
\end{proof}
\begin{lemma}[Uniqueness]
    \label{lem:uniqueness}
    The solutions $u,\phi$ to the weak Alfvén wave equations from Definition \ref{def:alfven-wave_weak} are unique.
\end{lemma}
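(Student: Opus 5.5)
By linearity it suffices to show that the only solution with vanishing data, $f=0$, $g=0$, $u_0=0$, $\phi_0=0$, is the trivial one. Indeed, if $(u_1,\phi_1)$ and $(u_2,\phi_2)$ both solve the problem of Definition \ref{def:alfven-wave_weak}, then $u:=u_1-u_2$ and $\phi:=\phi_1-\phi_2$ lie in the same spaces. They satisfy \eqref{eq:alfven-weak1}--\eqref{eq:alfven-weak2} with zero right-hand sides and zero initial data. I will prove $u=0$ and $\phi=0$ by an energy argument that mirrors the formal conservation law of Section \ref{sec:AprioriBound}. The test functions are chosen so that they are admissible for the regularity we actually have.

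The choice of test functions is the delicate point. Testing \eqref{eq:alfven-weak2} with $\psi=\phi'$ is not allowed, since $\phi'(t)$ is only known to lie in $H^1_{\perp,0}(\Omega)$ and not in $H^1_0(\Omega)$. I therefore test \eqref{eq:alfven-weak1} with $v=u(t)\in L^2(\Omega)$, and \eqref{eq:alfven-weak2} with $\psi=\phi(t)$. The latter is admissible for a.e. $t$ because $\phi\in L^2(0,T;H^1_0(\Omega))$. This gives, for a.e. $t$,
\begin{align*}
  \int_\Omega u'u - \int_\Omega u\,\b\cdot\nabla\phi &= 0, \\
  -\int_\Omega \nablaperp\phi'\cdot\nablaperp\phi - \int_\Omega u\,\b\cdot\nabla\phi &= 0 .
\end{align*}
The coupling term $\int_\Omega u\,\b\cdot\nabla\phi$ is well defined because $u(t)\in L^2(\Omega)$ and $\b\cdot\nabla\phi(t)\in L^2(\Omega)$. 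Subtracting the second identity from the first cancels this term and yields
\begin{align*}
  \int_\Omega u'u + \int_\Omega \nablaperp\phi'\cdot\nablaperp\phi = 0 \quad\text{for a.e. } t\in(0,T).
\end{align*}

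Next I identify the left-hand side as $\tfrac12\frac{\mathrm d}{\mathrm dt}\big(\normL{u}^2+\normL{\nablaperp\phi}^2\big)$. I use the standard fact that, for a Hilbert space $H$ and $w\in H^1(0,T;H)$, the map $t\mapsto\|w(t)\|_H^2$ is absolutely continuous with derivative $2\langle w',w\rangle_H$ a.e. This applies to $u$ with $H=L^2(\Omega)$. It also applies to $\phi$ with $H=H^1_{\perp,0}(\Omega)$, normed by $\normL{\nablaperp\cdot}$, which is a norm by Corollary \ref{CorollaryPoincare}. Integrating from $0$ to $t$ and using $u(0)=0$, $\phi(0)=0$ gives $\normL{u(t)}^2+\normL{\nablaperp\phi(t)}^2=0$ for all $t\in[0,T]$.

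Hence $u\equiv0$ and $\nablaperp\phi\equiv0$. The Poincaré-type inequality \eqref{eq:poincare_H1perp} then gives $\phi(t)=0$ for every $t$, which proves uniqueness. The only real obstacle is the admissibility of test functions, which is handled by testing the second equation with $\phi$ rather than $\phi'$. The remaining step, the absolute continuity of the two norms in time, is routine in the Hilbert setting.
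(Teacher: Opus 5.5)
Your proof is correct and follows the same energy argument as the paper. You test \eqref{eq:alfven-weak1} with $v=u(t)$ and \eqref{eq:alfven-weak2} with $\psi=\phi(t)$, cancel the coupling term, and use the zero initial data to obtain $\normL{u(t)}^2+\normL{\nablaperp\phi(t)}^2=0$. You are more explicit than the paper about why $\psi=\phi(t)$ is admissible, about the absolute continuity of the energy in time, and about using \eqref{eq:poincare_H1perp} to pass from $\nablaperp\phi=0$ to $\phi=0$. The one detail both of you leave implicit is that, by separability of $L^2(\Omega)$ and $H^1_0(\Omega)$, the weak equations hold for all test functions outside a single null set of times; this is what justifies choosing $t$-dependent test functions.
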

\begin{proof}
    By linearity of the equation we only need to show that the unique solution with initial conditions $(0,0)$ and $f=g=0$ is the trivial solution. Choose $v=u$, and $\psi=\phi$ in the first and second equation, respectively. We obtain for a.e. $t\in [0,T]$
    \begin{align*}
        \frac{\text{d}}{\text{d}t} \left ( \left\lVert u \right\rVert_{L^2(\Omega)}^2 + \left\lVert \nabla_\perp \phi \right\rVert_{L^2(\Omega)}^2   \right )=0.
    \end{align*}
    Thus $\|u\|_{L^2(\Omega)}^2 + \left\lVert \nabla_\perp \phi \right\rVert_{L^2(\Omega)}^2$ is constant and as $\phi(0)=0$ and $u(0)=0$ we get that $u=0$ and $\phi=0$, giving the claim.
\end{proof}
Finally, the following claim provides the continuous dependency on the initial data, which ends the proof of Theorem \ref{thm:well-posedness}.
\begin{lemma}
    The stability bound given in equation \eqref{eq:AWStab} holds true.
\end{lemma}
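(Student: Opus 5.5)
The plan is to pass the uniform finite-dimensional bounds of Lemma \ref{lem:stability} to the weak limit via weak lower semicontinuity of norms, and then use uniqueness (Lemma \ref{lem:uniqueness}) to upgrade from a subsequence to the whole solution. Write $D$ for the data quantity on the right-hand side of \eqref{eq:AWStab}, i.e. $\|u_0\|_{H^1(\Omega)}+\|\phi_0\|_{H^1_0(\Omega)}+\|f\|_X+\|g\|_{H^1(0,T;L^2(\Omega))}$. Note that $\|u_0\|_{H^1_{\b}(\Omega)}\le C\|u_0\|_{H^1(\Omega)}$ with $C$ depending on $\|\b\|_{L^\infty}$, so the bounds of Lemma \ref{lem:decoupled_stability} and Lemma \ref{lem:stability} can both be expressed in terms of $D$.

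\emph{Step 1: time-integrated bounds.} Lemma \ref{lem:stability} gives, for a.e.\ $t\in[0,T]$ and all $N$,
\[
\|u_N(t)\|_{L^2(\Omega)}+\|\phi_N(t)\|_{H^1_0(\Omega)}+\|\phi_N'(t)\|_{H^1_{\perp,0}(\Omega)}\le C_0 D.
\]
In the case $f\in H^1(0,T;L^2(\Omega))$ the same pointwise-in-time bound holds for $\|u_N'(t)\|_{L^2(\Omega)}$. In the case $f\in L^2(0,T;H^1_{\b}(\Omega))$ one has directly $\|u_N'\|_{L^2(0,T;L^2(\Omega))}\le C_0D$. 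Squaring the pointwise bounds and integrating over $(0,T)$ yields
\[
\|u_N\|_{H^1(0,T;L^2(\Omega))}+\|\phi_N\|_{L^2(0,T;H^1_0(\Omega))}+\|\phi_N'\|_{L^2(0,T;H^1_{\perp,0}(\Omega))}\le C_0\sqrt{T}\,D
\]
in either case, after enlarging $C_0$ by a factor depending only on $T$.

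\emph{Step 2: passage to the limit.} By \eqref{eq:weak_convergence_u} and \eqref{eq:weak_convergence_phi}, the subsequence $(u_{N_\ell},\phi_{N_\ell})$ converges weakly in $H^1(0,T;L^2(\Omega))$ and in $L^2(0,T;H^1_0(\Omega))\cap H^1(0,T;H^1_{\perp,0}(\Omega))$. In the latter space the map $\phi\mapsto\phi'$ is continuous into $L^2(0,T;H^1_{\perp,0}(\Omega))$, so $\phi_{N_\ell}'\rightharpoonup\phi'$ there as well. Norms on Hilbert spaces are weakly lower semicontinuous, so each term on the left of \eqref{eq:AWStab} evaluated at $(u,\phi)$ is bounded by the $\liminf$ of the corresponding term for $(u_{N_\ell},\phi_{N_\ell})$. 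Together with Step 1 this gives \eqref{eq:AWStab}.

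\emph{Step 3: the limit is the solution.} Lemma \ref{lem:existence} shows that the limit $(u,\phi)$ solves the problem in Definition \ref{def:alfven-wave_weak}. Lemma \ref{lem:uniqueness} shows that this solution is unique, so it coincides with the solution of Theorem \ref{thm:well-posedness}, and the bound \eqref{eq:AWStab} holds for it. The constant $C_0$ inherits its dependence only on $\Omega$, $\b$ (through $\CP$, $\|\div\b\|_{L^\infty}$ and $\|\b\|_{L^\infty}$), $\alpha$ and $T$ from the earlier lemmas.

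The only step that needs care is Step 1, namely converting the a.e.-in-time bounds into Bochner-norm bounds with $T$-dependent constants and handling the two cases for $f$ uniformly. The weaker control of $u_N'$ in the second case is already stated in $L^2(0,T;L^2(\Omega))$, so it fits directly into the $H^1(0,T;L^2(\Omega))$ norm. Beyond this bookkeeping there is no genuine obstacle.
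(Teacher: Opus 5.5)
Your proposal is correct and follows the paper's proof: the paper also takes the uniform-in-$N$ Bochner-norm bound coming from Lemma \ref{lem:stability} and passes to the limit along the weakly convergent subsequence using weak lower semicontinuity of norms. Your Step 1, which integrates the a.e.-in-time bounds over $(0,T)$, and your Step 3, which uses Lemma \ref{lem:uniqueness} to identify the limit with the solution, only make explicit what the paper leaves implicit.
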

\begin{proof}
    We already know that there exists a constant $C_0 > 0$ only depending on $\Omega, T > 0$, and $\b$ such that for all $N \in \N$,
    \begin{align*}
        \|u_{N} &\|_{H^1(0, T; L^2(\Omega))} + \|\phi_{N}' \|_{L^2(0, T; H^1_{\perp, 0}(\Omega))} + \|\phi_{N} \|_{L^2(0, T; H^1_0(\Omega))} \\
        &\leq C_0 \Big[ \|u_0\|_{H^1(\Omega)} + \|\phi_0\|_{H^1_0(\Omega)}
        + \|f\|_{H^1(0, T; L^2(\Omega))} + \|g\|_{H^1(0, T; L^2(\Omega))} \Big].
    \end{align*}
    By weak sequential lower semi-continuity of norms in normed spaces, we can pass to the limit, proving the claim.
\end{proof}

\subsection{Application to the tokamak}
\label{sec:physicaleq}

In this subsection, we show that the geometric hypotheses of \hyperref[thm:well-posedness]{Theorem \ref{thm:well-posedness}} are satisfied in the working conditions of fusion reactors, where the Braginskii model is used to simulate plasma turbulence, as remarked in the introduction. The geometry of such tokamak and stellarator reactors is described in more detail in \cite{romanelli_collisional_transport_tokamak, Reimerdes2022TCV, Wesson2004Tokamaks}. For this, let $\Omega$ be such a torus-shaped domain, let $U\subset \R^3$, open, such that $\overline{\Omega}\subset U$, and that $\b:\overline{\Omega} \to \R^3$ is defined as in equation \eqref{eq:global_z}, i.e. that $\b(x)\cdot\e_\theta(x) > 0$ for all $x \in U$.

We aim to show that $\b$ can be completed to an orthonormal Lipschitz frame $\b_1,\b_2,\b_3$ with $\b_1=\b$ on $\Omega$, and that $\b_3$ can be extended to a $C^1(\R^3;\R^3)$ vector field with bounded differential that is globally directed. Let ${\e_\rho,\e_\theta,\e_z}$ denote the vector fields defining the cylindrical basis. This setting is sufficient to construct an orthonormal $C^1(U;\R^3)$ frame $\b_1,\b_2,\b_3$. To this end, fix $x \in U$ and note that ${\e_\rho(x),\e_\theta(x),\e_z(x)}$ forms a basis of $\R^3$. Since $\b(x)\cdot\e_\theta(x)> 0$, it follows that ${\b(x),\e_\rho(x),\e_z(x)}$ is also a basis of $\R^3$. Applying the Gram–Schmidt procedure to ${\b(x),\e_\rho(x),\e_z(x)}$ yields an orthonormal basis ${\b_1(x),\b_2(x),\b_3(x)}$ of $\R^3$ with $\b_1(x)=\b(x)$. It is readily seen that this procedure preserves the smoothness of the initial basis. Consequently, ${\b_1,\b_2,\b_3}$ is an orthonormal $C^1(U;\R^3)$ frame, which in particular defines a Lipschitz frame on $\Omega$.

It remains to show that $\b_3$ can be extended to a globally directed $C^1(\R^3;\R^3)$ vector field with bounded differential. But this follows directly by observing that since on $U$ we have $\b_3(x) \cdot \e_z(x) > 0$ and $\Omega$ is compactly contained in $U$, we can choose a suitable cut-off function $\varphi$ such that
\[
\overline{\b}_3:= \b_3 \varphi + (1 - \varphi) \e_z
\]
defines a globally directed $C^1(\R^3;\R^3)$ extension of $\b_3|_{\Omega}$.

%% file: numeric_analysis.tex
\section{Numerical approximation}
\label{sec:numerics}

In this section we propose and analyze a numerical scheme for the weak Alfvén wave equations from Definition \ref{def:alfven-wave_weak} by utilising a Crank-Nicolson method for the time derivatives, and a finite element method in space. 

\begin{remark}
\label{rem:time-scale-invariance}
The factor $\mu$ in the first term of equation \eqref{eq:alfven-strong1} induces a time-scale invariance-like property. Given solutions $(u,\phi)$ to the problem with $\mu=1$, one can easily check that $(\sqrt{1/\mu}\ u,\phi)$ solve the problem for any $\mu$, if the time is also scaled by $\sqrt{1/\mu}$. 
This means, that choosing the realistic value $\mu=0.0005$ changes the amplitude of $u$ as well as the time scale, but nothing more: the scaling is \textit{independent} of $x\in \Omega$. Without loss of generality (a smaller $\mu$ and a smaller time step yield equivalent results) we present the following analysis for $\mu=1$.
\end{remark}

We consider a polyhedral mesh $\mathcal{T}_h$ of $\Omega$, i.e. a finite collection of polyhedra $K$ with disjoint interiors and diameter $h_K \leq h$, covering $\Omega$ exactly, and use it to define the Lagrange finite element space of order $r$
\begin{equation}
    \label{eq:femspace}
    X_h^{r}(\Omega) :=  \left \{  v_h\in C^0(\overline \Omega): v_h |_K \in \mathbb{P}^r,\ \forall K \in\mathcal{T}_h,
    \right \}.
\end{equation}
We denote by $X_{h,0}^{r}(\Omega):=X_{h}^{r}(\Omega)\cap H^1_0(\Omega)$. We assume $\mathcal{T}_h$ is shape regular, i.e. the element aspect ratio is bounded uniformly in $h$, as well as quasi-uniform. The spaces are $H^1(\Omega)$-conforming. Let $I_h^r:C^0(\overline\Omega) \to X^r_h(\Omega)$ be the Lagrange interpolation operator. For the time derivatives, we employ the Crank-Nicolson method. Let \( M \) denote the number of time steps, with \( n = 0, 1, \ldots, M \) and time step size \( \Delta t = T / M \).
For functions in \( X_h^r \) or \( X_{h,0}^r \), we use a subscript \( h \).
The superscript \(n\) indicates the value at time \(t_n := n \Delta t\) of the approximation of \(u(t_n)\) at that time instance.
For functions \( f \) and \( g \), the notation \( f^n \) and \( g^n \) similarly denote evaluation at \( t_n \).

The problem becomes, given $g \in H^1(0,T;L^2(\Omega))$ and $f \in H^1(0,T;L^2(\Omega))$ or $f \in L^2(0,T;H^1_{\b}(\Omega))$, given $u_0,\phi_0\in C^0(\overline{\Omega})$, and further given $u^n_h\in X^r_h(\Omega)$ and $\phi^n_h\in X^r_{h,0}(\Omega)$, we iteratively solve the following equations for $u_h^{n+1}\in X^r_h(\Omega)$ and $\phi_h^{n+1}\in X^r_{h,0}(\Omega)$ starting with the initial values $u^0_h:=I_h^ru(0)$ and $\phi^0_h:=I_h^r \phi(0)$ by requiring
\begin{equation}
\begin{split}
    \label{eq:alfven-fulldisc}
    \displaystyle \int_\Omega\frac{u_h^{n+1}-u_h^n}{\Delta t} v_h - \int_\Omega\b\cdot \nabla\left( \frac{ \phi_h^{n+1}+ \phi_h^n}{2} \right) v_h = \int_\Omega f(t^{n+\frac{1}{2}}) v_h, \\
    \displaystyle \int_\Omega \nabla_{\perp} \left(\frac{ \phi_h^{n+1}-\phi_h^n}{\Delta t}  \right)\cdot \nabla_{\perp}\psi_h + \int_\Omega \frac{ u_h^{n+1}+ u_h^n}{2} \b\cdot\nabla\psi_h = \int_\Omega g(t^{n+\frac{1}{2}})\psi_h,
\end{split}
\end{equation}
for all $v_h\in X_h^r(\Omega)$, $\psi_h\in X^{r}_{h,0}(\Omega)$. 
The scheme \eqref{eq:alfven-fulldisc} is stable and conservative.
\begin{lemma}[Discrete stability and energy conservation]   
\label{cor:timediscrete_stability}
    The fully discrete solution $(u_{h}^M,\phi_{h}^M)$ is stable in the energy norm, i.e. 
    there exists constants $C_1,C_2>0$ independent of $\Delta t$ such that
    \begin{align*}
    \left\lVert u_h^{M} \right\rVert_{L^2(\Omega)} + \left\lVert \nabla_\perp\phi_h^{M} \right\rVert_{L^2(\Omega)}
    \leq&\ C_{1} \left( \left\lVert u_h^0 \right\rVert_{L^2(\Omega)} + \left\lVert \nabla_\perp\phi_h^{0} \right\rVert_{L^2(\Omega)} \right) \\
    & + C_{2} \Delta t \sum_{n=0}^{M} \left( \lVert f(t^{n+\frac{1}{2}}) \rVert_{L^2(\Omega)}+\lVert g (t^{n+ \frac{1}{2}}) \rVert_{L^2(\Omega)} \right) 
    \end{align*}  
\end{lemma}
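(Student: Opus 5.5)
The plan is the discrete analogue of the energy identity from Section \ref{sec:AprioriBound}. The key choice is to test \eqref{eq:alfven-fulldisc} with the Crank–Nicolson averages. In the first equation I take $v_h = \frac{u_h^{n+1}+u_h^n}{2}\in X_h^r(\Omega)$. In the second equation I take $\psi_h=\frac{\phi_h^{n+1}+\phi_h^n}{2}\in X_{h,0}^r(\Omega)$. Both choices are admissible because the discrete spaces are linear.

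After multiplying by $\Delta t$ and adding the two equations, the coupling terms
\[
-\int_\Omega \b\cdot\nabla\Big(\tfrac{\phi_h^{n+1}+\phi_h^n}{2}\Big)\tfrac{u_h^{n+1}+u_h^n}{2} \quad\text{and}\quad \int_\Omega \tfrac{u_h^{n+1}+u_h^n}{2}\,\b\cdot\nabla\Big(\tfrac{\phi_h^{n+1}+\phi_h^n}{2}\Big)
\]
cancel exactly. The time-difference terms telescope via $(a-b)(a+b)=a^2-b^2$. Writing $E^n:=\|u_h^n\|_{L^2(\Omega)}^2+\|\nablaperp\phi_h^n\|_{L^2(\Omega)}^2$, this gives
\[
\tfrac12\big(E^{n+1}-E^n\big)=\Delta t\int_\Omega f(t^{n+\frac12})\,\tfrac{u_h^{n+1}+u_h^n}{2}+\Delta t\int_\Omega g(t^{n+\frac12})\,\tfrac{\phi_h^{n+1}+\phi_h^n}{2}.
\]
When $f=g=0$ this already yields exact conservation, $E^{n+1}=E^n$.

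For the forcing terms I apply Cauchy–Schwarz. The term in $g$ requires control of $\|\phi_h^{n+1}+\phi_h^n\|_{L^2(\Omega)}$ by the perpendicular gradient. This is the one nontrivial ingredient, and it is supplied by Corollary \ref{CorollaryPoincare}: since $X_{h,0}^r(\Omega)\subset H^1_0(\Omega)\subset H^1_{\perp,0}(\Omega)$, we have $\|\psi_h\|_{L^2(\Omega)}\le \CP\|\nablaperp\psi_h\|_{L^2(\Omega)}$. Setting $e^n:=\sqrt{E^n}$, both forcing terms are then bounded by
\[
\Delta t\, c\,\big(\|f^{n+\frac12}\|_{L^2(\Omega)}+\CP\|g^{n+\frac12}\|_{L^2(\Omega)}\big)\,(e^{n+1}+e^n)/\sqrt2,
\]
where $c$ is an absolute constant.

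Factoring $E^{n+1}-E^n=(e^{n+1}-e^n)(e^{n+1}+e^n)$ and dividing by $e^{n+1}+e^n$ gives the linear recursion
\[
e^{n+1}\le e^n+C\Delta t\big(\|f^{n+\frac12}\|_{L^2(\Omega)}+\|g^{n+\frac12}\|_{L^2(\Omega)}\big).
\]
If $e^{n+1}+e^n=0$, the inequality holds trivially. The constant $C$ depends only on $\CP$, so no Grönwall argument and no smallness condition on $\Delta t$ are needed.

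Summing the recursion over $n=0,\dots,M-1$ and using $\|u\|+\|\nablaperp\phi\|\le\sqrt2\,e\le\sqrt2(\|u\|+\|\nablaperp\phi\|)$ gives the claim with $C_1=\sqrt2$ and $C_2=\sqrt2\,C$, both independent of $\Delta t$. The sum up to $M$ in the statement only enlarges the right-hand side.

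The main obstacle is the $g$-term, which needs the Poincaré-type inequality to hold on the discrete space; the inclusion $X_{h,0}^r(\Omega)\subset H^1_{\perp,0}(\Omega)$ is what makes Corollary \ref{CorollaryPoincare} applicable there. A secondary point is to avoid Young's inequality, which would produce an $e^{CT}$ factor. Dividing by $e^{n+1}+e^n$ keeps the estimate linear and the constants clean.
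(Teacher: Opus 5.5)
Your proposal is correct and follows the same route as the paper. You test \eqref{eq:alfven-fulldisc} with the Crank--Nicolson averages, cancel the coupling terms to obtain the telescoping energy identity (hence exact conservation for $f=g=0$), control the $g$-term by the Poincar\'e inequality \eqref{eq:poincare_H1perp} on $X^r_{h,0}(\Omega)\subset H^1_{\perp,0}(\Omega)$, and sum over time steps; dividing by $e^{n+1}+e^n$ only makes explicit the linearization step the paper leaves implicit here (and uses itself in the proof of Theorem \ref{thm:error}).
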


\begin{proof}
    Test equations \eqref{eq:alfven-fulldisc} with $(u^{n+1}+u^n)/2$ and $(\phi^{n+1}+\phi^n)/2$, respectively, and integrate. Consequently
    \begin{align*}
        &\frac{1}{2\Delta t} 
        \left ( 
        \left\lVert u_{h}^{n+1} \right\rVert_{L^2(\Omega)} ^2 
        +\left\lVert \nabla_\perp\phi_{h}^{n+1} \right\rVert_{L^2(\Omega)} ^2 
        - \left\lVert u_{h}^{n} \right\rVert_{L^2(\Omega)}^2 
        -\left\lVert\nabla_\perp \phi_{h}^{n} \right\rVert_{L^2(\Omega)}^2 \right )\\
        &\hspace{6cm}= 
        \int_\Omega f^{n+\frac{1}{2}}\frac{u_{h}^{n+1}+u_{h}^n}{2} - g^{n+\frac{1}{2}} \frac{\phi_{h}^{n+1}+\phi_{h}^n}{2}.
    \end{align*}
    For $f=g=0$, its clear that the scheme conserves the energy $\left\lVert u_{h}^{n} \right\rVert_{L^2(\Omega)}^2 
    +\left\lVert \nabla_\perp\phi_{h}^{n} \right\rVert_{L^2(\Omega)}^2$ exactly. Using the Poincaré inequality from equation \eqref{eq:poincare_H1perp}, and summing over all time steps yields the result.
\end{proof}

We derive now an a priori error convergence estimate for the scheme \eqref{eq:alfven-fulldisc}. For an optimal error estimate, we assume more regularity for $u,\phi$ than the minimal one provided by Theorem \ref{thm:well-posedness}. Here, $\Pi_h$ denotes the $L^2$-projection onto $X^r(\Omega)$. We denote the projection and total errors by
\begin{equation}
\label{eq:error_projections}
\begin{alignedat}{2}
\eta_{u}^n &:= \Pi_{h}u(t^n) - u^n_{h}, \quad
\theta_{u}^n &:= u(t^n) - \Pi_{h} u(t^n), \quad
\epsilon_{u}^n &:= u(t^n) - u^n_{h}, \\
\eta_{\phi}^n &:= \Pi_{h}\phi(t^n) - \phi^n_{h}, \quad
\theta_{\phi}^n &:= \phi(t^n) - \Pi_{h}\phi(t^n), \quad
\epsilon_{\phi}^n &:= \phi(t^n) - \phi^n_{h}.
\end{alignedat}
\end{equation}
\begin{theorem}[A priori error estimate]   
\label{thm:error}
    Suppose that the exact solutions $u,\phi$ from Theorem \ref{thm:well-posedness} are in $C^3([0,T];H^{r+1}(\Omega))$. Let $(u_h^M,\phi_h^M)$ be the solution of equations \eqref{eq:alfven-fulldisc}. Then, there exists a constant $C>0$, independent of $h$ and $\Delta t$ such that
    $$
    \lVert \epsilon_{u}^M  \rVert_{L^2(\Omega)} +  \lVert \nabla_{\perp}\epsilon_{\phi}^M  \rVert_{L^2(\Omega)}  
    \leq 
    CT(h^r+ \Delta t^2).
    $$
\end{theorem}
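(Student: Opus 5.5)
We split the total error as $\epsilon^n = \theta^n + \eta^n$ for both unknowns, using the projections from \eqref{eq:error_projections}. The target is an error equation for the discrete part $(\eta_u^n,\eta_\phi^n)$ that has exactly the structure of the scheme \eqref{eq:alfven-fulldisc}. The right-hand side will collect projection residuals and Crank--Nicolson truncation terms, and the discrete energy argument of Lemma~\ref{cor:timediscrete_stability} then closes the estimate. For the $\phi$-equation, which carries the form $\int\nabla_\perp\cdot\nabla_\perp$, the $L^2$-projection $\Pi_h$ is awkward; there we would rather use an elliptic-type projection $R_h:H^1_0\to X^r_{h,0}$ defined by $\int_\Omega \nabla R_h\phi\cdot\nabla\psi_h=\int_\Omega\nabla\phi\cdot\nabla\psi_h$ (or even the interpolant $I_h^r$). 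Since only $H^1$-type approximation $\|\nabla(\phi-R_h\phi)\|\le Ch^r\|\phi\|_{H^{r+1}}$ is needed for an $O(h^r)$ rate, any of these choices suffices, with the $L^2$-projection kept for $u$.

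\emph{Step 1 (error equations).} Evaluate the weak form \eqref{eq:alfven-weak1}--\eqref{eq:alfven-weak2} at $t^{n+1/2}$ and subtract the scheme. The exact solution satisfies the discrete equations up to consistency residuals
\[
\tau_1^n=\frac{u(t^{n+1})-u(t^n)}{\Delta t}-u'(t^{n+\frac12}),\qquad \tau_2^n=\b\cdot\nabla\Big(\tfrac{\phi(t^{n+1})+\phi(t^n)}{2}-\phi(t^{n+\frac12})\Big),
\]
together with analogous terms in the second equation. By Taylor expansion with $u,\phi\in C^3([0,T];H^{r+1})$, each of these is $O(\Delta t^2)$ in $L^2$, respectively in $H^1$ for the $\phi$ terms. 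Moving the $\theta$-terms to the right-hand side gives, for all $v_h,\psi_h$,
\[
\Big(\tfrac{\eta_u^{n+1}-\eta_u^n}{\Delta t},v_h\Big)-\Big(\b\cdot\nabla\tfrac{\eta_\phi^{n+1}+\eta_\phi^n}{2},v_h\Big)=R_1^n(v_h),
\]
and similarly $R_2^n(\psi_h)$ for the second equation. Here $R_1^n$ contains $\tau_1^n,\tau_2^n$, the time difference of $\theta_u$ (which vanishes against $v_h$ for the $L^2$-projection), and $(\b\cdot\nabla\bar\theta_\phi,v_h)$. $R_2^n$ contains $(\nabla_\perp\delta_t\theta_\phi,\nabla_\perp\psi_h)$ and $(\bar\theta_u,\b\cdot\nabla\psi_h)$.

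\emph{Step 2 (energy argument).} Test with $v_h=\bar\eta_u:=(\eta_u^{n+1}+\eta_u^n)/2$ and $\psi_h=\bar\eta_\phi$. As in Lemma~\ref{cor:timediscrete_stability}, the coupling terms cancel, leaving
\[
E^{n+1}-E^n=2\Delta t\,\big(R_1^n(\bar\eta_u)+R_2^n(\bar\eta_\phi)\big),\qquad E^n:=\|\eta_u^n\|^2+\|\nabla_\perp\eta_\phi^n\|^2 .
\]
Each residual must be bounded by $C(h^r+\Delta t^2)(\|\bar\eta_u\|+\|\nabla_\perp\bar\eta_\phi\|)$. For $R_1^n$ this is direct, since $\|\b\cdot\nabla\theta_\phi\|\le Ch^r$. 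For $\delta_t\theta_\phi$, the projection commutes with time differencing, so $\|\nabla_\perp\delta_t\theta_\phi\|\le Ch^r\sup\|\phi'\|_{H^{r+1}}$.

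\emph{Main obstacle.} The term $(\bar\theta_u,\b\cdot\nabla\bar\eta_\phi)$ in $R_2^n$ involves the parallel derivative of $\eta_\phi$, which the energy does not control. We would handle it in two ways. First, integrate by parts: it equals $-(\div(\b\bar\theta_u),\bar\eta_\phi)$, and since $\|\nabla\theta_u\|\le Ch^r$ and $\b\in W^{1,\infty}$, we get $\|\div(\b\theta_u)\|\le Ch^r$. Second, the Poincaré inequality \eqref{eq:poincare_H1perp} gives $\|\bar\eta_\phi\|\le \CP\|\nabla_\perp\bar\eta_\phi\|$. This is precisely where the geometric hypothesis of Corollary~\ref{CorollaryPoincare} enters.

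With all residuals bounded, $\sqrt{E^{n+1}}-\sqrt{E^n}\le C\Delta t\,(h^r+\Delta t^2)$. Summing over $n$ and using $E^0\le Ch^{2r}$, which follows from the interpolated initial data, yields $\sqrt{E^M}\le C(1+T)(h^r+\Delta t^2)$. The triangle inequality with $\|\theta_u^M\|+\|\nabla_\perp\theta_\phi^M\|\le Ch^r$ then gives the claim, with the factor $T$ absorbing the constants.
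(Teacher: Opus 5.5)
Your proposal is correct. It shares the paper's skeleton: the splitting $\epsilon=\theta+\eta$, error equations with the structure of the scheme, testing with the averages of $\eta_u,\eta_\phi$, the $(a-b)(a+b)$ step, telescoping, and the triangle inequality. The difference is in how you handle the critical term $(\bar\theta_u,\b\cdot\nabla\bar\eta_\phi)$.

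The paper bounds this term by $\|\bar\theta_u\|_{L^2(\Omega)}\|\nabla\bar\eta_\phi\|_{L^2(\Omega)}$. It then applies an inverse inequality and the Poincaré inequality, $\|\nabla\bar\eta_\phi\|\le C_{inv}\CP h^{-1}\|\nabla_\perp\bar\eta_\phi\|$, which spends one power of the optimal $L^2$ bound $\|\theta_u\|\le Ch^{r+1}$. That version needs only the $L^2$ approximation property of $\Pi_h$ and no derivative of $\b$.

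You instead move the derivative onto $\div(\b\bar\theta_u)$ and use $\|\nabla\theta_u\|\le Ch^r$, $\b\in W^{1,\infty}$ and Poincaré. Both routes give $O(h^r)$. The mesh requirements are comparable, since the $H^1$ bound for the $L^2$-projection error is usually proved with an inverse estimate on quasi-uniform meshes, although yours only needs $H^1$-stability of $\Pi_h$.

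The real gain of your route is that it also handles the Crank--Nicolson residual $\frac{u(t^{n+1})+u(t^n)}{2}-u(t^{n+\frac12})$. In the second equation this residual is likewise paired with $\b\cdot\nabla\psi_h$. The inverse-inequality argument would only give $\Delta t^2/h$ there, so the paper's bound $|\mathcal{R}_\phi^n(\psi_h)|\le C\Delta t^2\|\nabla_\perp\psi_h\|_{L^2(\Omega)}$ tacitly relies on your integration by parts.

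This also means that in your Step 1 the $L^2$ bound you quote for this $u$-residual is not enough. You should use its $O(\Delta t^2)$ bound in $H^1$, which $u\in C^3([0,T];H^{r+1}(\Omega))$ provides, and apply the same argument as in your main-obstacle paragraph.

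Finally, taking a Ritz projection (or $I_h^r$) into $X^r_{h,0}(\Omega)$ for $\phi$ guarantees that $\eta_\phi$ is an admissible test function. This is not automatic for an $L^2$-projection onto $X^r_h(\Omega)$, which ignores the Dirichlet condition.
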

We remark that this estimate is suboptimal for $\epsilon_u$.
\begin{proof}

\input{estimate_proof}
Finally, since $u,\phi \in C^3([0,T];H^{r+1}(\Omega))$, standard $L^2$-projection and interpolation estimates yield $\lVert \eta_u^0 \rVert_{L^2(\Omega)} \leq C h^{r+1}$ and $\lVert \nabla_\perp \eta_\phi^0 \rVert_{L^2(\Omega)} \leq C h^r$, so these terms are absorbed into the $CT(h^r + \Delta t^2)$ bound. 
We combine the estimate for $\eta^M$ with the approximation error estimates $\lVert \theta_{u}^M  \rVert_{L^2(\Omega)} \leq C h^{r+1}$, and $\lVert \nabla_{\perp}\theta_{\phi}^M  \rVert_{L^2(\Omega)} \leq C h^{r}$ with constants depending on $\lVert u \rVert_{H^{r+1}(\Omega)}$ and $\lVert \phi \rVert_{H^{r+1}(\Omega)}$, respectively, which yields the statement.
\end{proof}

%% file: estimate_proof.tex
Subtracting equations \eqref{eq:alfven-weak1},\eqref{eq:alfven-weak2} from \eqref{eq:alfven-fulldisc} gives
\begin{align*}
&\displaystyle \int_\Omega\left(u'(t^{n+\frac{1}{2}}) - \frac{u_h^{n+1}-u_h^n}{\Delta t}\right) v_h
- \int_\Omega\b \cdot \nabla\left(\phi(t^{n+\frac{1}{2}}) - \frac{\phi_h^{n+1} + \phi_h^n}{2}\right) v_h
= 0,\\ 
&\displaystyle -\int_\Omega\nabla_\perp \left(\phi'(t^{n+\frac{1}{2}}) - \frac{\phi_h^{n+1}-\phi_h^n}{\Delta t}\right)\cdot \nabla_\perp \psi_h\\
&\hspace{5cm}- \int_\Omega\left(u(t^{n+\frac{1}{2}}) - \frac{u_h^{n+1}+u_h^n}{2}\right) \b \cdot \nabla \psi_h =0.
\end{align*}
Using Taylor expansions of the Crank-Nicolson time discretisation gives
\begin{align*}
\displaystyle  &\int_\Omega\frac{\epsilon_u^{n+1}-\epsilon_u^n}{\Delta t} v_h
- \int_\Omega \b \cdot \nabla \left( \frac{\epsilon_\phi^{n+1}+\epsilon_\phi^n}{2} \right) v_h 
= \mathcal{R}_u^n(v_h),\\
\displaystyle & -\int_\Omega\nabla_\perp \frac{\epsilon_\phi^{n+1}-\epsilon_\phi^n}{\Delta t} \cdot \nabla_\perp \psi_h
- \int_\Omega\frac{\epsilon_u^{n+1}+\epsilon_u^n}{2} \b \cdot \nabla \psi_h 
= \mathcal{R}_\phi^n(\psi_h),
\end{align*}
where the linear consistency error functionals satisfy, for a constant $C>0$ depending on $\lVert u \rVert_{C^3([0,T];H^{r+1}(\Omega))}$ and $\lVert \phi \rVert_{C^3([0,T];H^{r+1}(\Omega))}$ but independent of $h$ and $\Delta t$,
\begin{equation}
|\mathcal{R}_u^n(v_h)| \leq C \Delta t^2 \lVert v_h \rVert_{L^2(\Omega)}, \qquad
|\mathcal{R}_\phi^n(\psi_h)| \leq C \Delta t^2 \lVert \nabla_\perp \psi_h \rVert_{L^2(\Omega)}.
\end{equation}
Using equations \eqref{eq:error_projections} and the definition of $\Pi_h$, we get
\begin{align*}
    &\displaystyle \int_{\Omega} \frac{\eta_u^{n+1}-\eta_u^n}{\Delta t} v_h - \int_{\Omega} v_h \b \cdot \nabla \frac{\eta_\phi^{n+1}+\eta_\phi^n}{2} 
    = \int_{\Omega} v_h \b \cdot \nabla \frac{\theta_\phi^{n+1}+\theta_\phi^n}{2} + \mathcal{R}_u^n(v_h),\\    
    & \displaystyle -\int_{\Omega} \nabla_\perp \frac{\eta_\phi^{n+1}-\eta_\phi^n}{\Delta t} \cdot \nabla_\perp \psi_h - \int_{\Omega} \frac{\eta_u^{n+1}+\eta_u^n}{2} \b \cdot \nabla \psi_h \\
    &\displaystyle \hspace{1cm}= \int_{\Omega} \nabla_\perp \frac{\theta_\phi^{n+1}-\theta_\phi^n}{\Delta t} \cdot \nabla_\perp \psi_h + \int_{\Omega} \frac{\theta_u^{n+1}+\theta_u^n}{2} \b \cdot \nabla \psi_h + \mathcal{R}_\phi^n(\psi_h).
\end{align*}
To get control of $\eta_u,\eta_\phi$, we test with $v_h := \frac{\eta_u^{n+1} + \eta_u^n}{2}$ and $\psi_h := \frac{\eta_\phi^{n+1} + \eta_\phi^n}{2},$ and combine the previous two equations. After using the Poincaré inequality from equation \eqref{eq:poincare_H1perp}, as well as an inverse inequality, we get the following estimate.
\begin{align*}
&\frac{1}{2\Delta t} \left( \lVert \eta_{u}^{n+1} \rVert_{L^2(\Omega)}^2 + \lVert \nabla_{\perp}\eta_{\phi}^{n+1} \rVert_{L^2(\Omega)}^2 - \lVert \eta_{u}^n \rVert_{L^2(\Omega)}^2 - \lVert \nabla_{\perp}\eta_{\phi}^n \rVert_{L^2(\Omega)}^2 \right) \\
&\hspace{2.4cm}\leq \frac{\sqrt{ 2 }\max{(\alpha,\beta)}}{2} 
\Bigg(  \underbrace{ \sqrt{ \lVert \eta_{u}^{n+1} \rVert_{L^2(\Omega)}^2 +\lVert \nabla_{\perp}\eta_{\phi}^{n+1} \rVert_{L^2(\Omega)}^2 } }_{ =:a } \\
&\hspace{6cm}+\underbrace{ \sqrt{ \lVert \eta_{u}^{n} \rVert_{L^2(\Omega)}^2+\lVert \nabla_{\perp}\eta_{\phi}^{n} \rVert_{L^2(\Omega)}^2 } }_{ =:b } \Bigg). 
\end{align*}
with
\begin{align*}
\alpha &:= \left\lVert \b\cdot \nabla \left( \frac{\theta_{\phi}^{n+1}+\theta_{\phi}^n}{2} \right) \right\rVert_{L^2(\Omega)} + C\Delta t^2, \\
\beta &:= \left\lVert \nabla_{\perp}\left( \frac{\theta_{\phi}^{n+1}-\theta_{\phi}^n}{\Delta t} \right) \right\rVert_{L^2(\Omega)} + \frac{C_{inv}C_{p}}{h} \left\lVert \frac{\theta_{u}^{n+1}+\theta_{u}^n}{2} \right\rVert_{L^2(\Omega)} + C\Delta t^2.
\end{align*}
This is an inequality of type $(a+b)(a-b)\leq \sqrt{ 2 }\Delta t (a+b) \max(\alpha,\beta)$ which yields $a\leq b + \sqrt{ 2 }\Delta t \max(\alpha, \beta)$ or more explicitly 
\begin{equation}
\begin{aligned}
\label{eq:etabound1}
&\sqrt{\lVert \eta_{u}^{n+1} \rVert_{L^2(\Omega)}^2 + \lVert \nabla_{\perp}\eta_{\phi}^{n+1} \rVert_{L^2(\Omega)}^2}  \\
&\qquad\leq
\sqrt{\lVert \eta_{u}^{n} \rVert_{L^2(\Omega)}^2 + \lVert \nabla_{\perp}\eta_{\phi}^{n} \rVert_{L^2(\Omega)}^2}  
+ \sqrt{2}\,\Delta t\,\max(\alpha,\beta).
\end{aligned}
\end{equation}
What is left is to bound $\alpha$ and $\beta$. As $\Pi_h$ is bounded and linear, we get
\begin{align*}
\left\lVert \nabla_{\perp}\left( \frac{\theta_{\phi}^{n+1}-\theta_{\phi}^n}{\Delta t} \right) \right\rVert_{L^2(\Omega)} 
&= \frac{1}{\Delta t} \left\lVert \int_{t^n}^{t^{n+1}} \frac{\text{d}}{\text{d}s} \left[  \nabla_\perp\left( \phi(s)-\Pi_{h}\phi(s) \right)  \right] ds \right\rVert_{L^2(\Omega)} \\
&\leq \sup_{s\in (t^n,t^{n+1})}  \left\lVert \nabla_{\perp}[\phi'(s)-\Pi_{h}\phi'(s)]  \right\rVert_{L^2(\Omega)}\\
&\leq C h^{r} \lVert \phi \rVert_{C^1([0,T];H^{r+1}(\Omega))} .
\end{align*}
Elementary calculations show that we can bound 
\begin{align*}
\left\lVert \b \cdot \nabla \left( \frac{\theta_{\phi}^{n+1} + \theta_{\phi}^n }{2}\right) \right\rVert_{L^2(\Omega)} 
\leq &\ Ch^r,\\
\frac{C_{inv}C_{p}}{h} \left\lVert \frac{\theta_{u}^{n+1}+\theta_{u}^n }{2}  \right\rVert_{L^2(\Omega)} \leq &\ Ch^r.
\end{align*}
All constants $C$ above are independent of $\Delta t$ and $h$. Consequentially \eqref{eq:etabound1} becomes
$$\begin{aligned}
\sqrt{\lVert \eta_{u}^{n+1} \rVert_{L^2(\Omega)}^2 + \lVert \nabla_{\perp}\eta_{\phi}^{n+1} \rVert_{L^2(\Omega)}^2}
\leq
\sqrt{\lVert \eta_{u}^{n} \rVert_{L^2(\Omega)}^2 + \lVert \nabla_{\perp}\eta_{\phi}^{n} \rVert_{L^2(\Omega)}^2}
+ C\Delta t (h^r + \Delta t^2)
\end{aligned}$$
for a constant $C$ independent of $h$, and $\Delta t$. Summing over $n$ (and using that there are $M$ time steps, and so $\Delta t M = T$) yields
$$
\sqrt{\lVert \eta_{u}^{M} \rVert_{L^2(\Omega)}^2 + \lVert \nabla_{\perp}\eta_{\phi}^{M} \rVert_{L^2(\Omega)}^2}
\leq
\sqrt{\lVert \eta_{u}^{0} \rVert_{L^2(\Omega)}^2 + \lVert \nabla_{\perp}\eta_{\phi}^{0} \rVert_{L^2(\Omega)}^2}
+ CT (h^r + \Delta t^2).
$$

%% file: Numerical_Examples.tex
\section{Numerical examples}   
\label{sec:numexamples}


We will perform several numerical example test cases to validate our numerical scheme. We set $\mu=1$, see Remark \ref{rem:time-scale-invariance}. In all tests below, the linear system is solved using GMRES with a relative tolerance of $10^{-8}$ and with an incomplete LU preconditioner. The matrices of the discretisation are assembled using FEniCSx, \cite{fenics-UFL, dolfinx, basix, fenicsx-arbitrary-order}, and periodic boundary conditions are enforced using multipoint constraints, see \cite{dokken_dolfinx_mpc_2024}.


\subsection{Error convergence on 2D domain with a mesh-aligned $\b$-field} 


In the following we present several example computations, that should adhere to the a priori error convergence estimates presented in Theorem \ref{thm:error}. We use continuous finite elements of degree $r$ on quadrilaterals. The error $||u-u_h||_{L^2}+||\nabla_\perp \phi-\phi_h||_{L^2}$ at final time is supposed to converge at order $\Delta t^2 +h^r$. As remarked, this estimate is suboptimal for $u$, and indeed we will see that the $L^2$ error of $u$ converges one order faster in $h$.



Let $\Omega=(0,\pi)^2$ and $\b=(1,0)$ and denote by $k_x,k_y\in \mathbb{Z}$ denote the wave numbers in the $x$ and $y$ direction, respectively. Then
\begin{equation}
\label{eq:analyticalsol}
\begin{split}
\displaystyle u &= \frac{k_{y}}{\sqrt{ \mu }}\sin (k_{y}y)\left[ 1-\cos(k_{x}x)\cos \left( \frac{k_{x}}{k_{y}\sqrt{ \mu }}t \right)  \right], \\
\displaystyle \phi &= \sin \left( \frac{k_{x}}{\sqrt{ \mu } k_{y}}t \right) \sin(xk_{x})\sin(yk_{y})
\end{split}
\end{equation}
solve \eqref{eq:alfven-strong1}-\eqref{eq:alfven-strong2} with $f=g=0$ in $\Omega$ and $\phi=0$ on $\partial \Omega$. The oscillation frequency in time, $\omega$, is fully determined by $k_x,k_y$ and we have $\omega = \frac{k_{x}}{\sqrt{ \mu } k_{y}}$. As a consequence, we have a very unusual situation for linear wave problems, where short wavelengths in $x$ direction yield a low oscillation frequency $\omega$, while short wavelengths in $y$ direction correspond to high $\omega$. We set $\mu=k_x=k_y=1$ and solve the linear system using a direct solver. The results are presented in table \ref{tab:convrates_exact_r1}-\ref{tab:convrates_exact_r3}. We measure the error in the natural energy norm at final time $T=t^M=2.1$ and denote by $N_i$ the number of degrees of freedom in spatial direction $i$.
\begin{table}[ht!]
\centering
\begin{tabular}{cccccc}
$N_x=N_y$ & $N_t$ &$\Vert u(t^M)-u_h^M\Vert_{L^2}$ & rate & $\Vert \nabla_\perp(\phi(t^M)-\phi_h^M)\Vert_{L^2}$ & rate \\
\hline
4    & 10  & $2.0198 \cdot 10^{-1}$ & - & $3.1071 \cdot 10^{-1}$ & - \\
8    & 20  & $5.1764 \cdot 10^{-2}$ & 2.3163 & $1.5414 \cdot 10^{-1}$ & 1.1926 \\
16   & 40  & $1.3022 \cdot 10^{-2}$ & 2.1699 & $7.6909 \cdot 10^{-2}$ & 1.0931 \\
32   & 80  & $3.2607 \cdot 10^{-3}$ & 2.0876 & $3.8435 \cdot 10^{-2}$ & 1.0458 \\
64   & 160 & $8.1550 \cdot 10^{-4}$ & 2.0445 & $1.9215 \cdot 10^{-2}$ & 1.0227 \\
128  & 320 & $2.0390 \cdot 10^{-4}$ & 2.0224 & $9.6071 \cdot 10^{-3}$ & 1.0113 
\end{tabular}
\caption{Convergence rates for exact solutions, $r=1$.}
\label{tab:convrates_exact_r1}
\end{table}
\begin{table}[ht!]
\centering
\begin{tabular}{cccccc}
$N_x=N_y$ & $N_t$ &$\Vert u(t^M)-u_h^M\Vert_{L^2}$ & rate & $\Vert \nabla_\perp(\phi(t^M)-\phi_h^M)\Vert_{L^2}$ & rate \\
\hline
4    & 10   & $1.3368 \cdot 10^{-2}$ & - & $3.3270 \cdot 10^{-2}$ & - \\
8    & 30   & $1.6384 \cdot 10^{-3}$ & 3.3007 & $7.8350 \cdot 10^{-3}$ & 2.2737 \\
16   & 90   & $1.7617 \cdot 10^{-4}$ & 3.3620 & $1.9495 \cdot 10^{-3}$ & 2.0972 \\
32   & 270  & $1.9934 \cdot 10^{-5}$ & 3.2144 & $4.8710 \cdot 10^{-4}$ & 2.0458 \\
64   & 810  & $2.3901 \cdot 10^{-6}$ & 3.0946 & $1.2176 \cdot 10^{-4}$ & 2.0227 
\end{tabular}
\caption{Convergence rates for exact solutions, $r=2$.}
\label{tab:convrates_exact_r2}
\end{table}
\begin{table}[ht!]
\centering
\begin{tabular}{cccccc}
$N_x=N_y$ & $N_t$ &$\Vert u(t^M)-u_h^M\Vert_{L^2}$ & rate & $\Vert \nabla_\perp(\phi(t^M)-\phi_h^M)\Vert_{L^2}$ & rate \\
\hline
4    & 10    & $1.0430 \cdot 10^{-2}$ & - & $6.3801 \cdot 10^{-3}$ & - \\
8    & 40    & $6.5427 \cdot 10^{-4}$ & 4.2343 & $4.6126 \cdot 10^{-4}$ & 4.0172 \\
16   & 160   & $4.0897 \cdot 10^{-5}$ & 4.1199 & $4.0199 \cdot 10^{-5}$ & 3.6260 \\
32   & 640   & $2.5559 \cdot 10^{-6}$ & 4.0602 & $4.3083 \cdot 10^{-6}$ & 3.2704 \\
64   & 2560  & $1.6063 \cdot 10^{-7}$ & 4.0221 & $5.1355 \cdot 10^{-7}$ & 3.0916 
\end{tabular}
\caption{Convergence rates for exact solutions, $r=3$.}
\label{tab:convrates_exact_r3}
\end{table}
The numerical results in Tables \ref{tab:convrates_exact_r1}–\ref{tab:convrates_exact_r3} are consistent with the a priori convergence estimate of Theorem \ref{thm:error}. For $r=1$, the method achieves second-order convergence in the measured error of $u$ and first order for $\nabla_\perp\phi$, as expected. For $r=2$ and $r=3$, the observed rates in both $u$ and $\phi$ match the predicted orders, showing that the proposed scheme performs well.

\subsection{Error convergence on 3D domain with non-aligned $\b$-field}
\label{sec:3Dconvtest}
We run now a slightly more complex, three-dimensional test case on a periodic domain, 
$\Omega = \left(L_{\rho,\text{min}},L_{\rho,\text{max}}\right)\times\RR /(2\pi\ZZ)\times\left(0,L_z\right)$, modeling a tokamak reactor as introduced in sections \ref{sec:alfvenwaveequations} and \ref{sec:physicaleq} and compute manufactured solutions for a magnetic field which is not aligned with the mesh. We choose $L_{\rho,\text{min}}=0.5$, $L_{\rho,\text{max}}=1$, and $L_z=1$. We remark that the following tests have been conducted using a cylindrical coordinate system, which introduces the usual correction factors depending on the radius $\rho$ in the integrals and the derivatives. We introduce
\begin{align*}
    \b_\epsilon(\rho,\theta,z)= \left(\sqrt{1-2\epsilon^2},\epsilon,\epsilon\right),
\end{align*}
and compute errors for $\epsilon=0.2$. We define the manufactured solutions
\begin{align*}
    u_{ex}(\rho,\theta,z) &:= \rho\sin(\theta),\\
    \phi_{ex}(\rho,\theta,z) &:= 
    \sin \left( \frac{\pi(\rho-L_{\rho,\text{min}})}{L_\rho-L_{\rho,\text{min}}} \right)
    \sin\left(\frac{z\pi}{L_z}\right) \sin(\theta)
\end{align*}
and use the Alfven wave equations \eqref{eq:alfven-strong1}, \eqref{eq:alfven-strong2} to compute the corresponding $f,g$. We measure the error in the natural norm at final time $T=t^M=0.1$ and denote by $N_i$ the number of degrees of freedom in spatial direction $i$. The results are presented in table \ref{tab:convrates_manuf_eps0.2_r1} and \ref{tab:convrates_manuf_eps0.2_r2}. 

\begin{table}[ht!]
\centering
\begin{tabular}{cccccc}
$N_i$ & $N_t$ &$\Vert u(t^M)-u_h^M\Vert_{L^2}$ & rate & $\Vert \nabla_\perp(\phi(t^M)-\phi_h^M)\Vert_{L^2}$ & rate \\
\hline
4    & 1    & $1.9401 \cdot 10^{-1}$ & - & $1.2373 \cdot 10^{0}$ & - \\
8    & 2    & $4.8199 \cdot 10^{-2}$ & 2.3694 & $4.9234 \cdot 10^{-1}$ & 1.5679 \\
16   & 4    & $1.2172 \cdot 10^{-2}$ & 2.1641 & $2.2216 \cdot 10^{-1}$ & 1.2514 \\
32   & 8    & $3.0538 \cdot 10^{-3}$ & 2.0848 & $1.0766 \cdot 10^{-1}$ & 1.0922 \\
64   & 16   & $7.6728 \cdot 10^{-4}$ & 2.0379 & $5.3390 \cdot 10^{-2}$ & 1.0348 \\
\end{tabular}
\caption{Convergence rates for manufactured solutions for $\b_{0.2}$, $r=1$.}
\label{tab:convrates_manuf_eps0.2_r1}
\end{table}

\begin{table}[ht!]
\centering
\begin{tabular}{cccccc}
$N_i$ & $N_t$ &$\Vert u(t^M)-u_h^M\Vert_{L^2}$ & rate & $\Vert \nabla_\perp(\phi(t^M)-\phi_h^M)\Vert_{L^2}$ & rate \\
\hline
4   & 1   & $1.9262 \cdot 10^{-2}$ & -      & $1.1854 \cdot 10^{-1}$ & -      \\
8   & 3   & $2.8444 \cdot 10^{-3}$ & 3.0079 & $2.4036 \cdot 10^{-2}$ & 2.5093 \\
16  & 9   & $5.0992 \cdot 10^{-4}$ & 2.5917 & $5.5790 \cdot 10^{-3}$ & 2.2022 \\
32  & 27  & $1.2478 \cdot 10^{-4}$ & 2.0768 & $1.4546 \cdot 10^{-3}$ & 1.9832 \\
\end{tabular}
\caption{Convergence rates for manufactured solutions for $\b_{0.2}$, $r=2$.}
\label{tab:convrates_manuf_eps0.2_r2}
\end{table}



As visible in tables \ref{tab:convrates_exact_r1} and \ref{tab:convrates_manuf_eps0.2_r1}, the convergence rates for $r=1$ are well above the prediction from Theorem \ref{thm:error}. The convergence rate of the error $\Vert u-u_h\Vert_{L^2}$ is actually a full degree higher than the theoretical estimation. Similarly, the results from table \ref{tab:convrates_exact_r2} for $r=2$ are above the threshold, while the results from table \ref{tab:convrates_manuf_eps0.2_r2} for $r=2$ appear to be slightly below the predicted rate for the error on $\Vert u-u_h\Vert_{L^2}$. The convergence rate of $\Vert \nabla_\perp(\phi-\phi_h)\Vert_{L^2}$ behaves as predicted, i.e. reaches a value slightly higher than $1$ for $r=1$, and surpasses a value of $2$ for $r=2$.

\subsection{Energy conservation}
To validate further the correctness of the algorithm, we compute a solution to the homogeneous problem, i.e. with $f=g=0$ and check in each time step, whether the discrete energy $\mathcal{E}_h(t):=\int_\Omega \left(u_h(t)^2 +|\nabla_\perp \phi_h(t)|^2 \right)$ is indeed exactly conserved, as discussed in lemma \ref{cor:timediscrete_stability}. We report the following example computation to validate this conservation result. On a mesh of size $60\times 60\times 5$ on the domain $\Omega=(0.5,1)\times(0,1)\times\RR/2 \ZZ$ and with the realistic mass ratio value $\mu=0.0005$, we solve Alfven waves using 60 time steps and a final time of $T=2$. The magnetic field $\b$ is chosen as in section \ref{sec:3Dconvtest}. Let the initial condition be 
\begin{align*}
u(r,\varphi,z) &= 
\sin\!\left( \pi \varphi \right)\cos\!\left( \pi z \right)
\;+\;
\sin\!\left( 2\pi \varphi \right)\cos\!\left(2 \pi z \right),\\
\phi(r,\varphi,z) &=  0.
\end{align*}
\begin{table}[ht!]
\centering
\begin{tabular}{cc}
$t$ & $\mathcal{E}$ \\
\hline
0& $1.4095940948\cdot 10^{-4}$\\
2& $1.4095940750\cdot 10^{-4}$ 
\end{tabular}
\end{table}
The difference between the energy at $t=0$ and $t=T=2$ is $1.976\times 10^{-12}$. A similar computation on a coarser mesh, $30\times 30\times 5$ with 300 time steps up to final time $T=20$ yields a similar result. The absolute energy difference between $t=0$ and $t=T=20$ is $2.176 \times 10^{-12}$.
\begin{table}[H]
\centering
\begin{tabular}{cc}
$t$ & $\mathcal{E}$ \\
\hline
0  & $1.3176183247 \cdot 10^{-4}$ \\
10 & $1.3176183364 \cdot 10^{-4}$ \\
20 & $1.3176183465 \cdot 10^{-4}$ 
\end{tabular}
\end{table}

\subsection{Necessity of the geometric condition on the magnetic field}
From the proof Lemma \ref{lem:decoupled_existence} it is clear that the matrix $A_N$ is invertible if the Poincare-inequality from equation \eqref{eq:poincare_H1perp} holds, which itself holds if e.g. $\b$ is globally directed in the sense of equations \eqref{eq:global_z} and \eqref{eq:GloballyDirected}. In this section, however, we would like to examine how the direction of $\b$ influences the invertibility of the system matrix.
We consider a two-dimensional periodic problem to illustrate the dependency of the invertibility of the perpendicular Laplace operator on the orientation of a magnetic field vector $\b$. The computational domain is the unit square with periodic boundary conditions imposed in the $x$-direction and homogeneous Dirichlet conditions on $y=0$ and $y=1$, $\Omega = \RR/\ZZ \times (0,1)$. The magnetic field is given by
\[
\b_\epsilon:x\mapsto \left(\sqrt{1-\epsilon^2}, \epsilon\right), \quad \epsilon \in [0,1],
\]
so that $\b_\epsilon(x)$ remains a unit vector for all $\epsilon$. Recall the definition of the $\nabla_\perp$ operator from Definition \ref{def:perpgrad} and the induced bilinearform $\int_\Omega \nabla_\perp \phi\cdot\nabla_\perp\psi$ from the first term of equation \eqref{eq:alfven-weak2}. 

\begin{table}[h]
\centering
\begin{tabular}{cccc}
$\epsilon$ & Condition number & $\lambda_{\max}$ & $\lambda_{\min}$ \\
\hline
0          & $1.20 \times 10^{2}$ & $3.9021$ & $3.2629 \times 10^{-2}$ \\
0.9        & $1.72 \times 10^{2}$ & $3.1933$ & $1.8599 \times 10^{-2}$ \\
0.99       & $1.98 \times 10^{3}$ & $3.8571$ & $1.9480 \times 10^{-3}$ \\
0.999      & $2.01 \times 10^{4}$ & $3.9269$ & $1.9568 \times 10^{-4}$ \\
0.9999     & $2.01 \times 10^{5}$ & $3.9340$ & $1.9576 \times 10^{-5}$ \\
0.99999    & $2.01 \times 10^{6}$ & $3.9347$ & $1.9577 \times 10^{-6}$ \\
0.999999   & $2.01 \times 10^{7}$ & $3.9347$ & $1.9577 \times 10^{-7}$ \\
0.9999999  & $2.01 \times 10^{8}$ & $3.9347$ & $1.9577 \times 10^{-8}$ \\
0.99999999 & $2.01 \times 10^{9}$ & $3.9347$ & $1.9577 \times 10^{-9}$ \\
1 & $1.14 \times 10^{17}$ & $3.9347$ & $-7.3237 \times 10^{-16}$
\end{tabular}
\caption{Condition number and spectrum of the perpendicular Laplace matrix for varying $\epsilon$.}
\label{tab:perp_cond}
\end{table}

The trial and test functions $\phi,\psi$ belong to the Lagrange finite element space $X^1_{h,0} \subset H^1_{\perp,0}(\Omega)$ introduced below equation \eqref{eq:femspace}. We use a $10\times 10$, fully structured quadrilateral mesh.
The condition number is computed using \texttt{numpy} which itself uses an SVD to compute the condition number. Table \ref{tab:perp_cond} summarises the numerical results. The condition number remains moderate for small $\epsilon$, but grows rapidly as $\epsilon \to 1$. At $\epsilon=1$, the matrix is effectively singular, considering that the computations were performed in double precision arithmetic.

%% file: appendix.tex
\section{Vector-field dependent Sobolev spaces and Poincaré-type inequality}
\label{sec:spaces}
To state the Alfvén waves in a suitable weak formulation, we use the framework of \cite{maione_2020,maione_2020a,maione_2022,maione_2023}, who worked on certain examples of what we call \emph{vector-field-dependent Sobolev spaces}. Some properties of these spaces, such as traces, are also discussed in \cite{di_pietro_ern_2012}. In this section we provide a short presentation of those spaces.
\begin{definition}[Vector-field-dependent Sobolev spaces]
\label{def:vec_field_dep_sob}
Let $\Omega \subseteq \R^n$ be an open set, $p \in [1,\infty]$, and let $\mathcal{C}=\{\V_1,\ldots,\V_m\}$, where $\V_1,\ldots,\V_m \in \operatorname{Lip}(\Omega)^n$ be a collection of Lipschitz vector fields. We define the \emph{vector-field-dependent Sobolev space} induced by the collection of vector fields $\mathcal{C}$, as
\begin{align*}
    W^{1,p}_{\mathcal{C}}(\Omega):= \left \{v \in L^p(\Omega) \mid \V_i \cdot \nabla v \in L^p(\Omega),\ \forall i=1,\ldots,m \right \}.
\end{align*}
Further, we define $\nabla_\mathcal{C} := [\V_1 \cdot \nabla,\dots,\V_m \cdot \nabla]^\top$ and in case of $p=2$ we write $H^{1}_{\mathcal{C}}(\Omega):=W^{1,2}_{\mathcal{C}}(\Omega)$.
\end{definition}
One can verify that if we choose $\mathcal{C}$ to be the identity matrix, then we recover the classical Sobolev spaces, i.e., $W^{1,p}_{\text{Id}_n}(\Omega) = W^{1,p}(\Omega)$. Moreover, with the natural choice of norm, these spaces are Banach spaces, and Hilbert spaces in the case $p=2$. We now define the analogue of the space $H^1_0(\Omega)$ within the framework of \emph{vector-field-dependent Sobolev spaces}.
\begin{definition}[Closure of $\mathcal{D}(\Omega)$ in $W^{1,p}_{\mathcal{C}}(\Omega)$]
    Let $\Omega \subseteq \mathbb{R}^n$ be open, $p\in [1,\infty]$, and let $\mathcal{C}=\{\V_1,\ldots,\V_m\}$ be a collection of Lipschitz vector fields. We define $W^{1,p}_{\mathcal{C},0}(\Omega)$ as the closure of $\mathcal{D}(\Omega)$ in $W^{1,p}_\mathcal{C}(\Omega)$, i.e.,
    \[
     W_{\mathcal{C},0}^{1,p}(\Omega) := \overline{\mathcal{D}(\Omega)} \quad \text{ w.r.t. } \quad W_\mathcal{C}^{1,p}(\Omega).
    \]
    In case $p=2$ we write $H^{1}_{\mathcal{C},0}(\Omega)$.
\end{definition}
As $W^{1,p}_{\mathcal{C},0}(\Omega)$ is defined as the closure of a linear subspace of $W^{1,p}_\mathcal{C}(\Omega)$, it is a closed subspace of $W^{1,p}_\mathcal{C}(\Omega)$. In particular, it is a Banach space since $W^{1,p}_\mathcal{C}(\Omega)$ is complete.


In the following, we establish the Poincaré-type inequality that is essential for the well-posedness result stated in \hyperref[thm:well-posedness]{Theorem \ref{thm:well-posedness}}. We assume a geometric condition on the vector field that allows us to \emph{straighten} it and then apply the classical one-dimensional Poincaré inequality along the resulting straightened integral curves. The following results were first presented in a slightly weaker form in \cite{curved-poincare,azerad2007}. In particular, we do not assume here that the vector field is divergence-free. Alternative proofs and variations of the inequality in different settings can be found in \cite{brunken_2019, muga_2019, besson_2007,franchi_poincare_1995}.
\begin{definition}[Globally directed vector fields]
\label{def:globally_directed}
A vector field $\V:\mathbb{R}^n\rightarrow\mathbb{R}^n$ is said to be \emph{globally directed}, if there exists, a fixed direction $w\in \mathbb{R}^n$, $\Vert w\Vert =1$, and a constant $\alpha\in \RR$ such that
\begin{align*}
    \V(x)\cdot w \geq \alpha > 0\quad \forall x \in \mathbb{R}^n.
\end{align*}
\end{definition}
Now we construct the diffeomorphism to straighten globally directed vector fields.
\begin{proposition}[Diffeomorphism for globally directed vector fields]
    \label{lem:diffeomorphism}
    Let $\V \in C^k(\mathbb{R}^n;\mathbb{R}^n)$ with $k \geq 1$ be globally directed vector field with bounded differential.  Then there exists a $C^k$-diffeomorphism \(\Phi(t,s) : \R^n = \R \times \R^{n-1} \to \R^n\) such that
    \[
        \frac{\partial}{\partial t} \Phi(t,s) = \V \left(\, \Phi(t,s)\right) \quad \forall (t,s) \in \R \times \R^{n-1}.
    \]
\end{proposition}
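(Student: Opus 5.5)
The plan is to build $\Phi$ from the flow of $\V$, started on the hyperplane orthogonal to $w$. Let $H:=w^\perp\subseteq\R^n$ and fix a linear isometry $\iota:\R^{n-1}\to H$.

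\textbf{Step 1: the flow is global and $C^k$.} Since $\V\in C^k$ has bounded differential, it is globally Lipschitz and has at most linear growth. Hence the flow $\varphi_t$ of $\dot x=\V(x)$ exists for all $t\in\R$. By standard ODE theory it is $C^k$ jointly in $(t,x)$, and each $\varphi_t$ is a $C^k$-diffeomorphism of $\R^n$ with inverse $\varphi_{-t}$.

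\textbf{Step 2: the map.} Define
\[
\Phi(t,s):=\varphi_t(\iota s).
\]
Then $\partial_t\Phi=\V(\Phi)$ by construction, and $\Phi\in C^k$.

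\textbf{Step 3: bijectivity.} This is the key step, and it uses global directedness.

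\emph{Each trajectory crosses $H$ exactly once.} Along a trajectory, $\frac{\mathrm d}{\mathrm dt}\bigl(\varphi_t(x)\cdot w\bigr)=\V(\varphi_t(x))\cdot w\ge\alpha>0$. So $t\mapsto \varphi_t(x)\cdot w$ is strictly increasing and satisfies $|\varphi_t(x)\cdot w - x\cdot w|\ge \alpha|t|$. It is therefore a bijection $\R\to\R$, and there is a unique $\tau(x)$ with $\varphi_{\tau(x)}(x)\in H$.

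\emph{Injectivity.} If $\Phi(t,s)=\Phi(t',s')$, then $\iota s$ and $\iota s'$ lie on the same trajectory and both lie in $H$. By uniqueness of the crossing time, $t=t'$ and $s=s'$.

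\emph{Surjectivity.} For any $x$, we have $x=\Phi\bigl(-\tau(x),\,\iota^{-1}\varphi_{\tau(x)}(x)\bigr)$.

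\textbf{Step 4: $\Phi^{-1}$ is $C^k$.} I would show that $\tau$ is $C^k$ by the implicit function theorem applied to $F(t,x):=\varphi_t(x)\cdot w=0$. This applies because $\partial_tF=\V(\varphi_t(x))\cdot w\ge\alpha>0$. Then
\[
\Phi^{-1}(x)=\bigl(-\tau(x),\,\iota^{-1}\varphi_{\tau(x)}(x)\bigr)
\]
is a composition of $C^k$ maps, hence $C^k$. This already makes $\Phi$ a $C^k$-diffeomorphism.

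As a cross-check, $D\Phi(t,s)=\bigl[\V(\Phi)\mid D\varphi_t(\iota s)\,\iota\bigr]$ is invertible. Indeed, $D\varphi_t$ maps $\V(\iota s)$ to $\V(\varphi_t(\iota s))$, so
\[
D\Phi=D\varphi_t\,\bigl[\V(\iota s)\mid\iota\bigr],
\]
and the bracket is invertible because $\V(\iota s)\cdot w>0$, i.e. $\V(\iota s)\notin H$.

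\textbf{Main obstacle.} I expect the main difficulty to be the global part: ensuring that every point of $\R^n$ is reached and that no trajectory returns to $H$. This rests entirely on the lower bound $\V\cdot w\ge\alpha$. Without it, monotonicity could fail, or $\varphi_t\cdot w$ could stay bounded so that some trajectories never cross $H$. The bounded differential is needed only to guarantee global existence and regularity of the flow.
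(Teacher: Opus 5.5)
Your proposal is correct and follows essentially the same route as the paper: the flow of $\V$ started on the hyperplane $w^\perp$, bijectivity from strict monotonicity of $t\mapsto \varphi_t(x)\cdot w$ (slope at least $\alpha$), and invertibility of $D\Phi(t,s)=D\varphi_t(\iota s)\,[\V(\iota s)\mid\iota]$ via the flow property. The only difference is that you get $C^k$-regularity of $\Phi^{-1}$ directly, by applying the implicit function theorem to the crossing time $\tau$, whereas the paper implicitly combines bijectivity with the everywhere-invertible differential (i.e.\ the inverse function theorem), which your cross-check also supplies.
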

\begin{proof}
    The proof strategy is close to the one used in the Flowout theorem, see e.g. Theorem 9.20 in \cite{lee_2003}. As $\V$ is a $C^k$ vector field it induces a $C^k$ flow $\Theta$. This flow is global as $\Theta$ has bounded differential.
    \\
    Let \(\{w_2, \dots, w_n\}\) be an orthonormal basis for \(\operatorname{span}\{w\}^\perp\). Then \(\{\V(x), w_2, \dots, w_n\}\) is a basis for \(\R^n\) in each $x \in \R^n$. Define the map
    \begin{align*}
        &\Phi : \R^n = \R \times \R^{n-1} \to \R^n \quad \text{by} \quad (t,s) \mapsto \Theta \left ( t, \sum_{i=2}^n w_i s_i \right ) = \Theta \circ L (t,s) \quad \text{with} \\
        &L(t,s) = L \cdot [t, s^\top]^\top \quad \text{and} \quad L = 
        \begin{bmatrix}
            1 & 0 & \dots & 0 \\
            \mathbf{0} & w_2 & \dots & w_n \\
        \end{bmatrix}
        \in \R^{(n+1) \times n},
    \end{align*}
    This map is clearly $C^k(\R^n;\R^n)$ and we claim that it satisfies the claim. We prove this in a three steps. First, we show that the map is an immersion, followed by proving \(\frac{\partial}{\partial t} \Phi = \V \circ \, \Phi\) and finally, we show bijectivity of \(\Phi\).
        \\
        \\
        \textit{Invertibility of differential} : To show that the differential is invertible in each point \((t, s) \in \R \times \R^{n-1}\), we begin by computing the differential at a point \((0,s) \in \R \times \R^{n-1}\). Fix \((t, x) \in \R \times \R^n\) and \((v_t, v_x) \in \R \times \R^n\). Then 
        \[
        \Diff \Theta(t, x)[v_t, v_x] = \Diff_t \Theta(t, x)[v_t] + \Diff_x \Theta(t, x)[v_x],
        \]
        where \(\Diff_t, \Diff_x\) are the partial differentials w.r.t.\ \(t\) and \(x\). Clearly, \\
        $\Diff_t\Theta(t, x)[v_t] = \V(\Theta(t,x)) \cdot v_t$ and if \(t = 0\), we have that \(\Theta(0, \fdot) = \operatorname{Id}_{\R^{n}}\). Thus
        \begin{align*}
            \Diff \Theta(0,x) = 
            \begin{bmatrix}
                \V(x), e_1, \dots, e_n
            \end{bmatrix}
            \in \R^{n \times (n+1)}.
        \end{align*}
        By the chain rule, we get $\Diff \Phi(0, s) = 
        \begin{bmatrix}
            \V(x), w_2, \dots, w_n
        \end{bmatrix}$, where\\
        \(x = \sum_{i=2}^n s_i w_i\). This matrix is invertible, giving the claim for \((0, s) \in \R^n\).
        \\
        Now, for a general point \((t, s) \in \R \times \R^{n-1}\), we use a well-chosen translation. Fix \(t \in \R\) and define \(\tau_t : \R \times \R^n \to \R \times \R^n\) by \((t_0, x) \mapsto (t_0 + t, x)\). 
        \\
        We have that 
        \[
            \Theta \circ \tau_t(t_0, x) = \Theta(t+t_0, x) = \Theta(t,\Theta(t_0,x))=\Theta_t(\Theta(t_0,x))=\Theta_t \circ \Theta(t_0,x),
        \]
        showing that $\Theta \circ \tau_t = \Theta_t \circ \Theta$, where $\Theta_t:=\Theta(t,\fdot)$. From the fundamental theorem on flows, see e.g. Theorem 9.12 in \cite{lee_2003} we know that \(\Theta_t : \R^n \to \R^n\) is a $C^k(\R^n;\R^n)$-diffeomorphism. Therefore, taking the differential, we get
        \begin{align*}
            \Diff \Theta(t_0+t, x) = \Diff \Theta_t (\Theta(t_0,x)) \cdot \Diff \Theta(t_0,x).
        \end{align*}
        Taking \(t_0 = 0\), $(t,s) \in \R^{n}$ fixed and \(x = \sum_{i=2}^n s_i w_i\) we get that
        \[
            \Diff \Phi(t, s) = \Diff \Theta(t, x) \cdot L = \Diff \Theta_t(x) \cdot \Diff \Theta(0,x) \cdot L = \Diff \Theta_t(x) \cdot \Diff \Phi(0,s).
        \]
        As \(\Theta_t : \R^n \to \R^n\) is a diffeomorphism, we have that \(\Diff \Phi(t, s)\) is invertible.
        \\
        \\
        \textit{Derivative in $t$ direction} : This follows directly by observing that for $(t,s) \in \R \times \R^{n-1}$ we have that
        \begin{align*}
            \frac{\partial}{\partial t} \Phi(t,s) = \frac{\partial}{\partial t} (\Theta \circ L )(t,s) = \Diff \Theta(t,x) \cdot e_1 = \frac{\partial}{\partial t} \Theta(t,x) = \V \circ \, \Phi (t,s).
        \end{align*}
        \\
        \\
        \textit{Bijectivity} : We begin by showing surjectivity of \(\Phi\). Let \(x \in \R^n\). We know that \(t \mapsto \Theta(t, x)\) is the unique global integral curve of \(\V\) starting at \(x \in \R^n\). Define the map 
        \[
            W_{\Theta^{(x)}}(t) := \langle \Theta^{(x)}(t), w \rangle, \text{ where } \Theta^{(x)}(t):=\Theta(x,t)
        \]
        and observe that the map is $C^k$ with derivative $W_{\Theta^{(x)}}'(t) \geq \alpha > 0$, giving that \(W_{\Theta^{(x)}}(t)\) is strictly monotone increasing, and \(W_{\Theta^{(x)}}(t) \to \pm \infty\) as \(t \to \pm \infty\).
        \\
        By the intermediate value theorem, there exists \(t_0 \in \R\) such that \(W_{\Theta^{(x)}}(t_0) = 0\), which implies that $\Theta^{(x)}(t_0) \in \operatorname{span}\{w_2, \dots, w_n\}$. Thus there exists \(s_0 \in \R^{n-1}\) which is mapped to \(x_0 := \Theta^{(x)}(t_0)\).
        \\
        By uniqueness of the integral curves we get $\Theta(-t_0, x_0) = x$. This proves surjectivity. It remains to show injectivity.
        \\
        Let \((t_1, s_1), (t_2, s_2) \in \R \times \R^{n-1}\). Suppose for contradiction that \((t_1, s_1) \neq (t_2, s_2)\) and \(\Phi(t_1, s_1) = \Phi(t_2, s_2)\). Let \(x_1, x_2 \in \operatorname{span}\{w\}^\perp\) be the images of \(s_1, s_2\) by the map \(s \mapsto \sum_{i=2}^n s_i w_i\). If \(x_1 = x_2\), then \(t_1 \neq t_2\) and we get a contradiction as $W_{\Theta^{(x)}}(t)$ is strictly increasing.
        \\
        Suppose that \(x_1 \neq x_2\). We get that $\Theta^{(x_1)}, \Theta^{(x_2)}$ are different integral curves, which are such that there exists $t_1,t_2 \in \R$ such that \(\Theta^{(x_1)}(t_1) = \Theta^{(x_2)}(t_2)\). Both are global integral curves of \(\V\) starting at \(x_0 := \Theta^{(x_1)}(t_1) = \Theta^{(x_2)}(t_2)\). By uniqueness, we therefore have \(\Theta^{(x_1)}(t_1+t) = \Theta^{(x_2)}(t_2+t)\). If $t_1=t_2$ we get the contradiction $x_1=x_2$. If $t_1\neq t_2$, then $\Theta^{(x_2)}(0) = x_2$ and $\Theta^{(x_2)}(t_2-t_1) = x_1$, which is again a contradiction as \(W_{\Theta^{(x_2)}}\) is strictly increasing.
\end{proof}
Putting together proposition \ref{lem:diffeomorphism} and the standard Poincaré inequality we get the following generalisation of the usual Poincaré inequality to vector-field-dependent Sobolev spaces.
\begin{theorem}[Poincaré-type inequality for \(W^{1,p}_{\mathcal{C},0}(\Omega)\)-spaces]
    \label{thm:PoincareW1pC0}
    Let \(\Omega \subseteq \R^n\) be open and bounded, \(1 \leq p < \infty\) and $C:\Omega \to \mathbb{R}^{m \times n}$ be defined as in Definition \ref{def:vec_field_dep_sob}. Suppose there exists $j \in \{1,\ldots,m\}$ such that $\V_j \in C^1(\R^n;\R^n)$, has bounded differential and is globally directed as in Definition \ref{def:globally_directed}. Then there exists a constant \(\CP > 0\), only depending on \(\Omega\) and \(\mathcal{C}\), such that
    \begin{align}
        \label{eq:PoinIneq}
        \|u\|_{L^p(\Omega)} \leq \CP \|\nabla_\mathcal{C} \, u\|_{L^p(\Omega)} \text{  for all  } u \in W^{1,p}_{\mathcal{C},0}(\Omega).
    \end{align}
\end{theorem}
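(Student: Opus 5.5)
By density of $\mathcal{D}(\Omega)$ in $W^{1,p}_{\mathcal{C},0}(\Omega)$ and continuity of both sides of \eqref{eq:PoinIneq} in the $W^{1,p}_{\mathcal{C}}$ norm, it suffices to prove the inequality for $u \in \mathcal{D}(\Omega)$, extended by zero to all of $\R^n$. Since $|\V_j\cdot\nabla u| \le |\nabla_{\mathcal{C}} u|$ pointwise, it is enough to show
\[
\|u\|_{L^p(\Omega)} \le \CP \|\V_j\cdot\nabla u\|_{L^p(\Omega)}
\]
for a single globally directed field $\V := \V_j$. We straighten $\V$ with the $C^1$-diffeomorphism $\Phi$ from Proposition \ref{lem:diffeomorphism}. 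Setting $\tilde u := u\circ\Phi$, we have $\partial_t \tilde u(t,s) = \nabla u(\Phi(t,s))\cdot \V(\Phi(t,s))$. Hence, in straightened coordinates, $\V\cdot\nabla$ becomes the one-dimensional derivative $\partial_t$.

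The key geometric step is to bound the length of the $t$-intervals that the flow lines spend inside $\Omega$. Along any integral curve, $\frac{d}{dt}\langle\Phi(t,s),w\rangle = \V\cdot w \ge \alpha$. Since $\Omega$ is bounded, the projection $\langle x,w\rangle$ ranges over an interval of length at most $\operatorname{diam}(\Omega)$ for $x \in \Omega$. Therefore, for each fixed $s$, the set $\{t : \Phi(t,s)\in\Omega\}$ is contained in an interval $[a(s), a(s)+\ell]$ with $\ell := \operatorname{diam}(\Omega)/\alpha$. Because $\tilde u(\cdot,s)$ is $C^1$ with compact support inside that interval, the one-dimensional Poincaré inequality (or simply $\tilde u(t,s)=\int_{a(s)}^t \partial_t\tilde u$ followed by Hölder) gives
\[
\int_\R |\tilde u(t,s)|^p\,dt \le \ell^p \int_\R |\partial_t \tilde u(t,s)|^p\,dt .
\]

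The main obstacle is transferring this fibrewise estimate back to $\Omega$, because the change of variables introduces the Jacobian $J(t,s) = |\det \Diff\Phi(t,s)|$. We integrate the one-dimensional estimate against $J$. For this we need $J$ to be comparable along each fibre: $J(t,s)\le K\,J(t',s)$ for all $t,t'$ in the relevant interval, with $K$ uniform in $s$. By Liouville's formula, $\partial_t \log J(t,s) = (\div \V)(\Phi(t,s))$. Since $\V$ has bounded differential, $|\div\V|\le n\|\Diff\V\|_\infty$, so $J(t,s)/J(t',s) \le e^{n\|\Diff\V\|_\infty \ell}$ for $|t-t'|\le\ell$. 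This is exactly where divergence-freeness is no longer needed, in contrast with \cite{curved-poincare,azerad2007}. A weighted one-dimensional estimate then follows: bound $|\tilde u(t,s)|^p \le \ell^{p-1}\int_{a(s)}^{a(s)+\ell}|\partial_t\tilde u|^p$, multiply by $J(t,s)$, integrate in $t$, and use $J(t,s)\le K J(t',s)$ inside the inner integral. This yields
\[
\int |\tilde u|^p J\,dt \le K\ell^p \int |\partial_t\tilde u|^p J\,dt .
\]

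Integrating over $s\in\R^{n-1}$ and changing variables back with $\Phi$, which is a bijection by Proposition \ref{lem:diffeomorphism}, gives
\[
\|u\|_{L^p(\Omega)} \le K^{1/p}\,\ell\,\|\V\cdot\nabla u\|_{L^p(\Omega)} .
\]
The right-hand side is supported in $\Omega$ because $u$ is. The constant $\CP = e^{n\|\Diff\V\|_\infty \ell/p}\,\operatorname{diam}(\Omega)/\alpha$ depends only on $\Omega$ and $\mathcal{C}$, as claimed.
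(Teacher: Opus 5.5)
Your argument is correct, but it controls the Jacobian differently from the paper. The paper's proof is purely qualitative. It encloses the compact set $\Phi^{-1}(\overline{\Omega})$ in a box $B^n=[m_1,M_1]\times B^{n-1}$ and bounds $|J_\Phi|$ on $B^n$ and $|J_{\Phi^{-1}}|$ on $\overline{\Omega}$ by continuity. It then applies the one-dimensional Poincaré inequality on the fixed interval $(m_1,M_1)$ for every $s$, which gives the constant $\|J_\Phi\|_{L^\infty(B^n)}C_1^p\|J_{\Phi^{-1}}\|_{L^\infty(\Omega)}$.

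You make both ingredients quantitative:
\begin{itemize}
\item The monotonicity of $t\mapsto\langle\Phi(t,s),w\rangle$ confines each fibre's passage through $\Omega$ to an interval of length $\ell=\operatorname{diam}(\Omega)/\alpha$. The paper's box length $M_1-m_1$ is only known to be finite, and in general also depends on where $\Omega$ sits relative to the hyperplane $w^\perp$ used to build $\Phi$.
\item Liouville's formula $\partial_t\log J=(\div\V_j)\circ\Phi$ lets you compare $J$ only along each fibre. The paper instead compares globally via $\sup J_\Phi\cdot\sup J_{\Phi^{-1}}$.
\end{itemize}

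The paper's route is shorter and needs nothing beyond compactness and continuity of $\Diff\Phi$. Yours uses the variational equation for the flow. You should state that $\det\Diff\Phi(t,s)=\det\Diff\Theta_t(x)\,\det\Diff\Phi(0,s)$, as in the proof of Proposition~\ref{lem:diffeomorphism}, so that Jacobi's formula applies.

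In return, your route yields the explicit, translation-invariant constant $\CP=e^{n\|\Diff\V_j\|_\infty\ell/p}\,\ell$, depending only on $\operatorname{diam}(\Omega)$, $\alpha$ and $\|\Diff\V_j\|_\infty$. This makes precise the dependence described informally in the remark after the theorem. It also shows exactly how the divergence enters: for $\div\V_j=0$ the Jacobian is constant along fibres, and you recover $\CP=\operatorname{diam}(\Omega)/\alpha$, matching the divergence-free results the paper cites.
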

\begin{proof}
    By proposition \ref{lem:diffeomorphism}, there exists a $C^1$-diffeomorphism \(\Phi(t, s)\) such that
    \begin{align*}
        \frac{\partial}{\partial t} \Phi(t, s) = \V_j \circ \Phi(t, s) \quad \forall \; (t,s) \in \R \times \R^{n-1}.
    \end{align*}
    Observe that we only need to proof the inequality \eqref{eq:PoinIneq} on $\mathcal{D}(\Omega)$ and that for \(\varphi \in \mathcal{D}(\Omega)\), we have that \(\partial_{\V_j} \varphi = \langle \nabla \varphi, \V_j \rangle\).
    \\
    Now, fix \(\varphi \in \mathcal{D}(\Omega)\) and observe that, as \(\overline{\Omega} \subset \R^n\) is compact, also \(\Phi^{-1}(\overline{\Omega})\) is compact. Take a box $B^n := \prod_{i=1}^n [m_i,M_i] \subset \R^n$ with $m_i < M_i \in \R$ for all $i=1,\dots,n$ such that $\Phi^{-1}(\overline{\Omega}) \subset B^n$. Define $B^{n-1} \subseteq \R^{n-1}$ such that $B = [m_1, M_1] \times B^{n-1}$. Next, observe that as $\Phi$ is a diffeomorphism we have that $|J_{\Phi}| = |\det(\Diff \Phi)|$ is bounded on $B^n$, $|J_{\Phi^{-1}}| = |\det(\Diff \Phi^{-1})|$ is bounded on $\overline{\Omega}$ and by bijectivity of $\Phi$ we have $\overline{\Omega} \subseteq \Phi(B^n)$. Thus we get
    \[
        \|\varphi\|_{L^p(\Omega)}^p = \int_{\R^n} |\varphi|^p =  \int_{\R^n} |\varphi \circ \Phi|^p |J_\Phi| \leq \| J_\Phi \|_{L^\infty(B^n)} \int_{B^n} |\varphi \circ \Phi|^p.
    \]
    By Fubini/Tonelli $\int_{B^n} |\varphi \circ \Phi|^p = \int_{B^{n-1}} \int_{[m_1,M_1]} |\varphi \circ \Phi|^p \dt \ds$. As for all \(s \in B^{n-1}\), \(\varphi \circ \Phi(\fdot,s) \in W^{1,p}_0(m_1,M_1)\), we get that there exists \(C_1 > 0\) only dependent on \((m_1, M_1)\) such that
    \[
        \left \|\varphi \circ \Phi(\fdot,s) \right \|_{L^p(m_1,M_1)}^p \leq C_1^p \left \| \langle \nabla \varphi, \V_j \rangle \circ \Phi(\fdot, s) \right \|_{L^p(m_1,M_1)}^p
    \]
    by the standard Poincaré inequality on \(W^{1,p}_0(m_1,M_1)\). We get
    \begin{align*}
        \int_{B^{n-1}} \int_{m_1}^{M_1} |\varphi \circ \Phi|^p \dt \ds &\leq C_1^p \int_{B^n} |\inner{\nabla \varphi}{\V_j} \circ \Phi|^p \\
        &\leq C_1^p \|J_{\Phi^{-1}}\|_{L^\infty(\Omega)} \int_{\Omega} |\inner{\nabla \varphi}{\V_j}|^p
    \end{align*}
    by doing the inverse change of variables. As the constant \(\|J_{\Phi} \|_{L^\infty(B^n)} C_1^p \|J_{\Phi^{-1}}\|_{L^\infty(\Omega)}\) clearly only depends on \(\Omega\) and \(\V_j\), the claim is proved.
\end{proof}

\begin{remark}
The constant $C_P$ depends on the $t$-intervall length $\ell:=M_1 - m_1$, which itself is determined by $\Omega$ and 
the lower bound $\alpha$ in Definition \ref{def:globally_directed}; if 
$\Omega$ is \emph{large} or $\alpha \to 0$, then $\ell$ diverges, and therefore 
$C_P$ as well.
\end{remark}